\title{An alternative approach to heavy-traffic limits \\ for finite-pool queues}
\date{\today}
\author{G. Bet}
\begin{document}

\maketitle
\abstract{We consider a model for transitory queues in which only a finite number of customers can join. The queue thus operates over a finite time horizon. In this system, also known as the $\Delta_{(i)}/G/1$ queue, the customers decide independently when to join the queue by sampling their arrival time from a common distribution. We prove that, when the queue satisfies a certain heavy-traffic condition and under the additional assumption that the second moment of the service time is finite, the rescaled queue length process converges to a reflected Brownian motion with parabolic drift. Our result holds for general arrival times, thus improving on an earlier result \cite{bet2014heavy} which assumes exponential arrival times.}
\section{Introduction}%
The analysis of transient and time-dependent queueing models is of great relevance for numerous applications, such as call centres \cite{brown2005statistical} and outpatient wards of hospitals where the server operates only over a finite amount of time  \cite{ kim2014callcenter, kim2014choosing}. Besides their practical relevance, these systems provide a substantial mathematical challenge because the standard tools of renewal theory and ergodic theory are unsuited for their study. In other words, the steady-state distribution provides, if any, a poor approximation for the performance measures of transient queueing systems. 

Here we focus on a particular class of transient queues, in which a finite (but large) number $n$ of customers can potentially join. As time passes, fewer customers can join the queue, so that eventually the queue length process will be identically zero and only its time-dependent behavior is of interest. We exploit ideas from the heavy-traffic approximation literature to prove that the queue length process can be approximated by a diffusion consisting of a Brownian motion with parabolic drift, reflected at zero. 

The heavy-traffic approximation approach has been pioneered by Iglehart and Whitt \cite{iglehart1970multipleI} and has since been extended to a wide variety of settings where the time-dependent behavior is of interest, see \cite{glynn1990diffusions} for an excellent overview. Indeed, our result should be contrasted with \cite{iglehart1970multipleI}, where the queue length process is shown to converge to a reflected Brownian motion. The additional parabolic drift captures the effect of the diminishing pool of customers. 

Transient queueing models have been studied lately by Honnappa et al.~\cite{honnappa2015delta, honnappa2014transitory}. However, interest in non-ergodic queues dates back to the pioneering work of Newell on the so-called $M_t/M_t/1$ queue \cite{newell1968queuesI, newell1968queuesII, newell1968queuesIII}.  Later, Keller \cite{keller1982time} rederived Newell's heuristic results by methods of asymptotic expansion of the transition probabilities. Massey \cite{massey1985asymptotic} expanded and formalized these earlier results by using operator techniques. 
More recently, Honnappa, Jain and Ward \cite{honnappa2015delta} introduced the $\DG$ queue as a model for systems in which a finite number of customers can join and/or which operate only over a finite time window. In \cite{honnappa2015delta} the authors prove a Functional Law of Large Numbers (FLLN) and a Functional Central Limit Theorem (FCLT) for the $\DG$ queue under very mild assumptions. In \cite{bet2014heavy}, by exploiting a general \emph{martingale} FCLT from \cite{MarkovProcesses}, it is shown that, when the arrival times are exponentially distributed and under the additional assumption that the queue satisfies a certain heavy-traffic condition, the rescaled queue length process converges in distribution to a reflected Brownian motion with parabolic drift. 

The martingale FCLT is a convenient and powerful tool, but comes at a high cost in terms of computations to verify technical conditions. On the other hand, both the pre-limit and the limit queue length processes are easily characterized through explicit formulas. This suggests that it should possible to prove the convergence result in \cite{bet2014heavy} by using the ``straightforward'' approach to stochastic-process convergence, as detailed e.g.~in \cite{billingsley1999convergence}. As an example, assume a sequence of processes $(\mathcal S_n(\cdot))_{n\geq1}$ and a candidate limit $\mathcal S(\cdot)$ are given. The ``straightforward'' approach consists in proving separately the tightness of the family $(\mathcal S_n(\cdot))_{n\geq1}$, seen as measures on a certain function space, and the convergence of the finite-dimensional distributions, that is, as $n\rightarrow\infty$,
\begin{equation}\label{eq:finite_dimensional_convergence}%
\mathbb P(S_n(t_1)\in A_1,\ldots, S_n(t_k)\in A_k)\rightarrow\mathbb P(S(t_1)\in A_1,\ldots, S(t_k)\in A_k),
\end{equation}%
for each $k\geq1$ and $t_1,\ldots,t_k$. Note that condition \eqref{eq:finite_dimensional_convergence} characterizes the limit process uniquely. 

By exploiting this method, we prove that the queue length process of the $\DG$ queue converges in distribution to a Brownian motion with negative quadratic drift, reflected at zero. In particular, the proof we give is substantially simpler than the one in \cite{bet2014heavy}, requiring only the standard notions of stochastic-process convergence theory \cite{billingsley1999convergence}. This approach has two advantages. First, we impose mild assumptions on the arrival time distribution, thus generalizing \cite{bet2014heavy}, where the arrival times were assumed to be exponentially distributed. Second, as a consequence of our main theorem, several results relating quantities of interest other than the queue length can be deduced. As an example of this, we prove a sample path Little's Law.

The rest of the paper is organized as follows. In Section \ref{sec:model} we describe the $\DG$ model, our assumptions, the processes of interest and state the main result. In Section \ref{sec:proof} we prove the main theorem, by separately proving convergence of the terms appearing in the expression for the queue length process. In Section \ref{sec:sample_path_little_law} we prove a transient version of Little's Law by building on the techniques and results of Section \ref{sec:proof}. In Section \ref{sec:conclusions} we summarize our result and sketch some interesting future research directions.
\section{The model and the main result}\label{sec:model}
We consider a population of $n$ customers. Each customer is assigned a clock $T_i$, with $i=1,\ldots, n$. We assume $(T_i)_{i=1}^{\infty}$ to be a sequence of positive i.i.d.~random variables with common density function $\fT(\cdot)$ and distribution function $\FT(\cdot)$. In particular, $\FT(\cdot)$ is continuous. Customers arrive at a single server with an infinite buffer and are served on a First-Come-First-Served basis. 
The number  of arrivals in $[0,t]$ is then given by
\begin{equation}%
A^n(t) := \sum_{i=1}^n \mathds 1_{\{T_i\leq t\}}.
\end{equation}%
Note that $A^n(t)/n$ is the empirical cumulative distribution function associated with $(T_i)_{i=1}^n$. The service times $(S_i)_{i=1}^{\infty}$ are  i.i.d.~random variables such that $\sigma^2 := \Var(S) <\infty$. The corresponding (rescaled) renewal process is defined as
\begin{equation}\label{eq:service_renewal_process}%
S^n(t) : = \sup \Big\{m\geq1 \mid \sum_{i=1}^m S_i \leq nt\Big\}.
\end{equation}%
We further assume that at time zero the system obeys the \emph{heavy-traffic condition}
\begin{equation}%
\fT(0)= \sup_{t\geq0}\fT(t),
\end{equation}%
and that
\begin{equation}\label{eq:heavy_traffic_condition}%
\E[S]\fT(0) = 1,
\end{equation}%
which can be interpreted as follows. The number of arrivals in the interval $[0,\mathrm d t]$ is approximately $n(\FT(0+\mathrm dt) - \FT(0)) \approx n\fT(0)\mathrm d t$. Consequently, $\lambda_n = n\fT(0)$ represents the \emph{instantaneous} arrival rate in zero. On the other hand, because of the time scaling in \eqref{eq:service_renewal_process}, the service rate is $\mu_n = n/\E[S]$. The heavy-traffic condition is then equivalent to assuming that 
\begin{equation}\label{eq:heavy_traffic_condition_interpretation}%
\frac{\lambda_n}{\mu_n} = 1.
\end{equation}%
More generally, condition \eqref{eq:heavy_traffic_condition_interpretation} could be replaced by $\frac{\lambda_n}{\mu_n} = 1 + \varepsilon_n$, for some $\varepsilon_n\rightarrow0$, but we refrain from doing it here. For a detailed explanation of the condition \eqref{eq:heavy_traffic_condition}, see \cite{bet2014heavy}.
Our main object of interest is the queue length process, defined as
\begin{equation}\label{eq:def_queue_length_process}%
Q^n(t) = A^n(t) - S^n(B^n(t)).
\end{equation}%
Here $B^n(t)$ is a continuous process  that increases at rate $1$ if the server is working, and is constant otherwise.
Note that $A^n(t)$ and $S^n(t)$ are independent as they only depend respectively on $(T_i)_{i\geq1}$ and $(S_i)_{i\geq1}$. They interact through the time-change $t\mapsto B^n(t)$, which depends  on both $(T_i)_{i\geq1}$ and $(S_i)_{i\geq1}$. 
The \emph{diffusion-scaled heavy-traffic} queue length process is defined as 
\begin{equation}\label{eq:diffusion_scaled_queue_length}%
\hat Q ^n(t) := \frac{{Q}^n(tn^{-1/3})}{n^{1/3}}.
\end{equation}%
Recall that the Skorokhod reflection map is the functional %$(\phi,\psi):\mathcal D\rightarrow\mathcal D$ 
defined by 
\begin{align}%
\psi(f)(t) &= - \inf_{ s\leq t} (f(s))^-,\\
\phi(f)(t) &= f(t) + \psi(f)(t).
\end{align}%
We are now able to state our main theorem.
\begin{theorem}[Scaling limit of the queue length process]\label{th:main_theorem}
As $n\rightarrow\infty$,
\begin{equation}\label{eq:main_theorem_reflected_convergence}%
\hat Q ^n(t) \stackrel{\mathrm d}{\rightarrow} \phi(\hat X)(t),\qquad\mathrm{in}~(\mathcal D,J_1),
\end{equation}%
where 
\begin{equation}\label{eq:limiting_free_process}
\hat X(t) = B_1(f_{\sss T}(0) t) - \frac{\sigma}{\E[S]^{3/2}}B_2(t) - \frac{f'_{\sss T}(0)}{2} t^2,
\end{equation}%
and $B_1(\cdot), B_2(\cdot)$ are two independent standard Brownian motions. 
\end{theorem}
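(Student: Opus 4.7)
The approach is the classical queueing-theoretic one: realize $Q^n$ as the Skorokhod reflection of a netput process, establish weak convergence of the netput on the diffusion scale, and conclude via the continuous mapping theorem. Set $Y^n(t) := A^n(t) - S^n(t)$ and $L^n(t) := S^n(t) - S^n(B^n(t))$. Then $Q^n(t) = Y^n(t) + L^n(t)$, with $L^n$ non-negative, non-decreasing, null at $0$, and growing only on $\{Q^n = 0\}$ (since $B^n$ increases at unit rate exactly when the server is busy, leaving $S^n\circ B^n$ to keep pace with $S^n$). This is the defining characterization of Skorokhod reflection, so $Q^n = \phi(Y^n)$; the scaling then yields $\hat Q^n = \phi(\hat Y^n)$ with $\hat Y^n(t) := Y^n(tn^{-1/3})/n^{1/3}$, using that $\phi$ commutes with positive scalar multiplication.

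For the convergence $\hat Y^n \stackrel{\mathrm d}{\rightarrow} \hat X$, I would decompose
\[
\hat Y^n(t) = \hat A^n(t) - \hat S^n(t) + D^n(t),
\]
with $\hat A^n(t) := [A^n(tn^{-1/3}) - n\FT(tn^{-1/3})]/n^{1/3}$, $\hat S^n(t) := [S^n(tn^{-1/3}) - tn^{2/3}/\E[S]]/n^{1/3}$, and $D^n(t) := [n\FT(tn^{-1/3}) - tn^{2/3}/\E[S]]/n^{1/3}$, and treat the three terms separately. A second-order Taylor expansion of $\FT$ at $0$, combined with the heavy-traffic identity $\E[S]\fT(0)=1$ which cancels the linear term, gives $D^n(t) \to \tfrac{1}{2} f'_{\sss T}(0) t^2$ uniformly on compact sets. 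The centered renewal process $\hat S^n$ converges to $(\sigma/\E[S]^{3/2}) B_2$ by the standard renewal FCLT: writing $S^n(tn^{-1/3}) = N(n^{2/3}t)$ for the renewal process $N$ generated by $(S_i)$, and setting $m:=n^{2/3}$, one has $\hat S^n(t) = [N(mt) - mt/\E[S]]/\sqrt{m}$, which is precisely the classical scaling with parameter $m \to \infty$. Finally, $\hat A^n(t) = n^{-1/3}\sum_{i=1}^n [\mathds 1\{T_i \leq tn^{-1/3}\} - \FT(tn^{-1/3})]$ has variance $n\FT(tn^{-1/3})(1-\FT(tn^{-1/3}))/n^{2/3} \to \fT(0) t$, matching the target $B_1(\fT(0)\cdot)$: finite-dimensional convergence follows from the multivariate Lindeberg CLT applied to the $n$ i.i.d.\ bounded summands, and tightness in $(\mathcal D, J_1)$ from a fourth-moment bound on increments combined with a standard criterion such as Theorem 13.5 of \cite{billingsley1999convergence}. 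Independence of $(A^n)$ and $(S^n)$ (plus determinism of $D^n$) promotes marginal to joint convergence, and continuity of the reflection map $\phi$ on $(\mathcal D, J_1)$ finishes the argument.

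The main technical hurdle is the functional convergence of $\hat A^n$. Each indicator $\mathds 1\{T_i \leq tn^{-1/3}\}$ has success probability of order $n^{-1/3} \to 0$, placing the problem in a rare-events regime outside the usual Donsker setting for the empirical process; the convergence rate is also $n^{1/3}$ rather than the familiar $n^{1/2}$, a consequence of the time window having length $O(n^{-1/3})$. Since however $n \cdot n^{-1/3} = n^{2/3} \to \infty$, the regime is still Gaussian rather than Poisson, so the Lindeberg condition and the moment bound on increments can be checked by hand, exploiting the simple Bernoulli structure of the summands. A minor secondary point is the uniform control of the Taylor remainder in $D^n$ on compact $t$-intervals, requiring mild regularity of $\fT$ near $0$; this is essentially built into the standing assumptions.
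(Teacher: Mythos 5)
Your decomposition $Q^n = Y^n + L^n$ with $Y^n := A^n - S^n$ and $L^n := S^n - S^n\circ B^n$ is algebraically valid, but the claim that $L^n$ is non-decreasing is false, and this is the crux of the argument. After the server has accumulated total idle time $c>0$, we have $B^n(t)=t-c$ on the ensuing busy period, so
\begin{equation}
L^n(t) \;=\; S^n(t) - S^n(t-c),
\end{equation}
the number of \emph{fictitious} renewal epochs of the single sample path of $S^n$ lying in the sliding window $(t-c,\,t]$. This quantity increments by $+1$ when the right endpoint of the window crosses a renewal time and by $-1$ when the left endpoint does, so it oscillates and is certainly not monotone. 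The phrase ``$S^n\circ B^n$ keeps pace with $S^n$'' conflates evaluating one renewal process at two time-shifted points with coupling two copies of the process; the former does not produce a non-decreasing difference. Consequently $(Y^n, L^n)$ is not a solution of the Skorokhod problem for $Q^n$, the identity $Q^n=\phi(Y^n)$ fails, and even though your $\hat Y^n$ does converge to $\hat X$ (it is asymptotically equivalent to the free process used in the paper), you cannot push that through $\phi$ to get convergence of $\hat Q^n$.

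The fix is the one the paper uses: keep the time change $B^n$ in the service term and add a compensating linear term. Set
\begin{equation}
X^n(t) \;=\; A^n(t) - S^n(B^n(t)) + \frac{B^n(t)}{\E[S]} - \fT(0)\,t .
\end{equation}
Writing $B^n(t) = t - I^n(t)$ and using the heavy-traffic identity $\E[S]\fT(0)=1$, one finds $Q^n(t)-X^n(t) = \fT(0)\,I^n(t)$, which is non-negative, non-decreasing, null at $0$, and flat whenever the server is busy (so it increases only on $\{Q^n=0\}$). This is a genuine Skorokhod decomposition, so $Q^n=\phi(X^n)$ holds exactly for every $n$, and scaling gives $\hat Q^n = \phi(\hat X^n)$. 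The price is that the diffusion-scaled service term becomes $\hat S^n(\bar B^n(t))$ rather than $\hat S^n(t)$, which forces you to additionally prove a fluid law of large numbers $\bar B^n(t)\to t$ (uniformly on compacts, a.s.) and then invoke a random time-change theorem before applying the continuous mapping theorem with $\phi$. Once the free process is corrected in this way, the rest of your plan — finite-dimensional convergence of $\hat A^n$ via a Lindeberg argument exploiting the rare-event Bernoulli structure, tightness via a fourth-moment bound on increments, the renewal FCLT for $\hat S^n$, and a second-order Taylor expansion of $\FT$ for the drift — is correct and is essentially the route taken in the paper.
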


\textbf{Notation.}~Here $\mathcal D(\mathbb R) = \mathcal D$ denotes the space of c\`adl\`ag functions with values in $\mathbb R$, that is of functions $f(\cdot):\mathbb R^+ \rightarrow\mathbb R$ which are continuous from the right at every point and such that $\lim_{s\rightarrow t^-} f(s)$ exists for all $t>0$. $\mathcal D$ is endowed with the usual Skorokhod $J_1$ topology. For a sequence of stochastic processes $(X_n)_{n\geq1}$, $X_n \sr{\mathrm d}{\rightarrow} X$ in $(\mathcal D, J_1)$ means that $(X_n)_{n\geq1}$, seen as a sequence of random variables on $\mathcal D$, converges to $X$ in distribution, when $\mathcal D$ is endowed with the $J_1$ topology. Analogously, $X_n \sr{\mathrm d}{\rightarrow} X$ in $(\mathcal D, U)$ means that $(X_n)_{n\geq1}$ converges to $X$ in distribution, uniformly over compact subsets. Recall that, for a sequence $(x_n)_{n\geq1}\subset \mathcal D$, if  $x_n\rightarrow x$ in $(\mathcal D, J_1)$ as $n\rightarrow\infty$, and $x$ is continuous, then $x_n\rightarrow x$ in $(\mathcal D, U)$, see  \cite[p.~124]{billingsley1999convergence}. When dealing with vectors of functions  we make use of the \emph{weak} $J_1$ topology $JW_1$. This coincides with the product topology on $\mathcal D\times \mathcal D\times\cdots \times \mathcal D = \mathcal D^k$. Given two (possibly random) functions, either on the real numbers or on the integers, $f, g$ the notation $f\sim g$ means $\lim_{x\rightarrow\infty} f(x)/g(x) = 1$, where $x\in\mathbb R$ or $x\in\mathbb N$. The notation $f(x) = \oP(g(x))$ means that $f(x)/g(x)\sr{\mathbb{P}}{\rightarrow}0$ as $x\rightarrow\infty$. The notation $f(x) = \Theta(g(x))$ means $f(x) = O(g(x)) $ and $g(x) = O(f(x))$. Finally, $f(x)^+=\max\{0,f(x)\}$ and $f(x)^-=\max\{0,-f(x)\}$ denote the positive and negative part of a function $f(\cdot)$ respectively.

\textbf{The cumulative busy time process.}\qquad We now give an explicit analytical characterization of $B^n(\cdot)$. To this end, we need to introduce several auxiliary processes.

The \emph{cumulative input} process is defined as 
\begin{equation}%
C^n(t) := \sum_{i=1}^{A^n(t)}\frac{S_i}{n}.
\end{equation}%
$C^n(t)$ can be seen as the (rescaled) total amount of work that has entered the queue by time $t$.
Assuming that the server works at speed one, the \emph{net-put process} $N^n(t)$ is defined as
\begin{equation}%
N^n(t) := C^n(t) - t.
\end{equation}%
The \emph{workload} process is then defined as
\begin{equation}%
L^n(t) := \phi(N^n)(t) = N^n(t) - \inf_{s\leq t} (N^n(s))^-.
\end{equation}%
Note that $L^n(t)$ is positive if and only if 
\begin{align}\label{eq:workload_positive_condition}%
C^n(t) &\geq t + \inf_{s\leq t} (N^n(s))^-\nnl
&= t  -\psi(N^n)(t).
\end{align}%
By construction, $\psi(N^n)(t)$ increases (linearly) if and only if the server is idling, and is constant otherwise. In other words, $I^n(t) := \psi(N^n)(t)$ can be interpreted as the cumulative idle time proess. Consequently the term on the right-hand side of \eqref{eq:workload_positive_condition} can be interpreted as the \emph{cumulative busy time} process, and we define it as
\begin{equation}\label{eq:def_cumulative_busy_time}%
B^n(t) := t  -\psi(N^n)(t).
\end{equation}%
Note that $B^n(t)$ increases only if the server is working, and is constant otherwise.  With this definition, \eqref{eq:workload_positive_condition} reads
\begin{equation}%
C^n(t) \geq B^n(t),
\end{equation}%
so that the workload is positive if and only if the cumulative input up to time $t$ is larger than the total time the server has spent processing jobs, and in that case it decreases linearly in time.

\textbf{The queue length process.}\qquad It is more convenient to express $Q^n(t)$ as a reflection of a simpler process $X^n(t)$. We will refer to $X^n(t)$ as the \emph{free process}. To do so, we rewrite \eqref{eq:def_queue_length_process} as
\begin{align}%
Q^n(t) &= \Big(A^n(t) - S^n(B^n(t)) +\frac{B^n(t)}{\E[S]} -\fT(0)t\Big)- \Big(\frac{B^n(t)}{\E[S]} - \fT(0)t\Big)\nnl
&= \Big(A^n(t) - S^n(B^n(t)) +\frac{B^n(t)}{\E[S]} -\fT(0)t\Big) + \fT(0) I^n(t),
\end{align}%
where we used \eqref{eq:heavy_traffic_condition} in the second equality. We define 
\begin{equation}%
X^n(t) = A^n(t) - S^n(B^n(t)) +\frac{B^n(t)}{\E[S]} -\fT(0)t. 
\end{equation}%
We recall that, for a given process $X^n(t)$, the \emph{Skorokhod problem} associated with $X^n(t)$ consists in finding two processes $P(t)$ and $R(t)$ such that $P(t) = X^n(t) + R(t)\geq 0$, $R(t)$ is increasing, and $\int_0^{\infty} X^n(t) \mathrm d R(t) =0$.
Note that $I^n(\cdot)$ is increasing and, by definition of $Q^n(t)$ and $I^n(t)$, 
\begin{equation}%
\int_0^{\infty} Q^n(t)\mathrm d I^n(t) = 0.
\end{equation}%
Then $Q^n(t)$ and $I^n(t)$ are a solution to the Skorokhod problem associated with $X^n(t)$ and, by applying \cite[Proposition 2.2, p.251]{asmussen2003applied} we have the representation 
\begin{equation}%
Q^n(t) = X^n + \psi (X^n)(t) = \phi (X^n) (t),
\end{equation}%
where 
\begin{equation}\label{eq:idle_time_representation}%
\psi (X^n)(t) = -\Big( \frac{B^n(t)}{\E[S]}-\fT(0) t\Big).
\end{equation}%

\textbf{The fluid and diffusive scaling regimes.}\qquad The \emph{fluid-scaled heavy-traffic} queue length process is defined as
\begin{equation}%
\bar{Q}^n (t) := \frac{Q^n(tn^{-1/3})}{n^{2/3}} = n^{1/3}\Big(\frac{A^n(tn^{-1/3})}{n} - \frac{S^n(B^n(tn^{-1/3}))}{n}\Big).
\end{equation}%
Correspondingly, $\bar X ^n(t)$ is defined as
\begin{align}%
\bar{X}^n (t) &:= n^{1/3}\Big(\frac{A^n(tn^{-1/3})}{n} - \frac{S^n(B^n(tn^{-1/3}))}{n}\Big) + n^{1/3}\frac{B^n(tn^{-1/3})}{\E[S]} -\fT(0)t \nnl
&= n^{1/3}\Big(\frac{A^n(n^{-1/3}t)}{n} -\FT(tn^{-1/3})\Big) -  n^{1/3}\Big(\frac{S^n(B^n(tn^{-1/3}))}{n} - \frac{B^n(tn^{-1/3})}{\E[S]}\Big) \nnl
&\quad+ (n^{1/3}\FT(tn^{-1/3}) - \fT(0)t).
\end{align}%
where in the second equality we have added and subtracted $\FT(t)$ in order to rewrite $\bar X^n(t)$. It can be shown through an application of the functional Law of Large Numbers that, as $n\rightarrow\infty$, the fluid-scaled process $\bar Q^n (\cdot)$ converges to a deterministic process $\bar Q(\cdot)$. However, under our heavy-traffic assumption the process $\bar Q(\cdot)$ is identically zero. Because of this, the diffusion-scaled  queue length process can be rewritten as
\begin{equation}%\label{eq:diffusion_scaled_queue_length}%
\hat Q ^n(t) = n^{1/3}\bar{Q}^n(t) = n^{1/3} (\bar{Q}^n(t) - \bar Q(t) ).
\end{equation}%
Accordingly, $\hat X^n(t)$ is defined as
\begin{align}\label{eq:diffusion_scaled_free_process}%
\hat X^n (t) &:= n^{1/3} \bar{X}^n(t)\nnl
&= n^{2/3}\Big(\frac{A^n(tn^{-1/3})}{n} - \FT(tn^{-1/3})\Big) - n^{2/3}\Big(\frac{S^n(B^n(tn^{-1/3}))}{n} - \frac{B^n(tn^{-1/3})}{\mathbb{E}[S]}\Big) \nnl
&\quad+n^{2/3}(\FT(tn^{-1/3}) - f_{\sss T}(0) tn^{-1/3}).
\end{align}%
In order to prove Theorem \ref{th:main_theorem} we will rely on an analogous result for $\hat X^n(\cdot)$. In fact, Theorem \ref{th:main_theorem} is a straightforward consequence of the following:
\begin{theorem}[Scaling limit of the free process]
As $n\rightarrow\infty$,
\begin{equation}\label{eq:main_theorem}%
\hat X^n (t) \stackrel{\mathrm d}{\rightarrow} \hat X(t),\qquad\mathrm{in}~(\mathcal D,J_1),
\end{equation}%
where $\hat X(\cdot)$ is given by
\begin{equation}
\hat X(t) = B_1(f_{\sss T}(0) t) - \frac{\sigma}{\E[S]^{3/2}}B_2(t) - \frac{f'_{\sss T}(0)}{2} t^2,
\end{equation}%
and $B_1(\cdot), B_2(\cdot)$ are two independent standard Brownian motions. 
\end{theorem}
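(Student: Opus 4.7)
The plan is to decompose $\hat X^n(t)$ into the three summands already isolated in \eqref{eq:diffusion_scaled_free_process}, label them $\hat X^n_A$, $\hat X^n_S$, $\hat X^n_D$ (arrival term, service term, deterministic bias), and establish convergence of each in isolation before patching them together via joint convergence plus the continuous mapping theorem applied to addition.

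For the arrival term
$$\hat X^n_A(t) = n^{-1/3}\sum_{i=1}^n\Big(\mathds 1_{\{T_i \le tn^{-1/3}\}} - \FT(tn^{-1/3})\Big),$$
I would follow the ``straightforward'' program advertised in the introduction: finite-dimensional convergence plus tightness in $(\mathcal D, J_1)$. For $0 \le s \le t$ the centered indicators have covariance $\FT(sn^{-1/3}) - \FT(sn^{-1/3})\FT(tn^{-1/3}) \sim \fT(0)\, s\, n^{-1/3}$, so a triangular-array Lindeberg--L\'evy CLT identifies the finite-dimensional limit as a centered Gaussian process with covariance $\fT(0)(s \wedge t)$, i.e.~$B_1(\fT(0)\cdot)$. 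Tightness should follow from a standard modulus-of-continuity bound, or equivalently from Donsker's theorem for the empirical process $\sqrt n(F^n - \FT)$ composed with the vanishing time window $[0, T n^{-1/3}]$, using the fact that a Brownian bridge evaluated near $0$ is asymptotically a standard Brownian motion.

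For the service term $\hat X^n_S(t) = -n^{2/3}(S^n(B^n(tn^{-1/3}))/n - B^n(tn^{-1/3})/\E[S])$ the strategy is to decouple the renewal piece from the random time change $B^n$. The standard renewal FCLT gives $\sqrt n(S^n(u)/n - u/\E[S]) \Rightarrow (\sigma/\E[S]^{3/2}) B_2(u)$, and Brownian scaling then yields
$$\eta^n(u) := n^{2/3}\Big(\frac{S^n(un^{-1/3})}{n} - \frac{un^{-1/3}}{\E[S]}\Big) \Rightarrow \frac{\sigma}{\E[S]^{3/2}} B_2(u)$$
in $(\mathcal D, U)$. To substitute $u = n^{1/3}B^n(tn^{-1/3})$ one needs $n^{1/3}B^n(tn^{-1/3}) \to t$ uniformly on compacts in probability; by \eqref{eq:def_cumulative_busy_time} this reduces to $n^{1/3}\psi(N^n)(tn^{-1/3}) \to 0$, which in turn follows from the heavy-traffic condition \eqref{eq:heavy_traffic_condition} (so that the drift of $N^n$ vanishes at the fluid scale) together with the continuity of $\psi$. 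A random time change theorem then delivers $\hat X^n_S(\cdot) \Rightarrow -(\sigma/\E[S]^{3/2})B_2(\cdot)$.

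The deterministic bias $\hat X^n_D(t) = n^{2/3}(\FT(tn^{-1/3}) - \fT(0)tn^{-1/3})$ is handled by Taylor expansion of $\FT$ at $0$, which gives $\tfrac{1}{2}\fT'(0)t^2 + o(1)$ uniformly on compacts, matching the quadratic term in the stated limit. Joint convergence in $(\mathcal D^3, JW_1)$ of $(\hat X^n_A, \hat X^n_S, \hat X^n_D)$ then follows from the independence of $(T_i)_{i\ge 1}$ and $(S_i)_{i\ge 1}$: apart from the vanishing coupling through $B^n$, the two random terms converge to $B_1$ and $-B_2$ with $B_1 \perp B_2$, and the third is deterministic. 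Continuity of addition on $(\mathcal D^3, JW_1)$ (the limits are continuous, so weak $J_1$ reduces to uniform-on-compacts convergence) then yields the theorem. The hard part will be controlling the random time change: one must show that the dependence of $B^n$ on both the arrival clocks and the service times induces only a negligible coupling, so that the limits $B_1$ and $B_2$ are genuinely independent, while simultaneously taming the oscillations of $\eta^n$ evaluated at a random argument.
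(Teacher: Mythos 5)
Your proposal mirrors the paper's proof essentially step for step: the same three-way decomposition of $\hat X^n$, finite-dimensional convergence via a triangular-array CLT plus a moment-based tightness criterion for the arrival term (the paper's ``local Donsker'' Lemma \ref{lem:local_donsker_theorem}), the renewal FCLT after the identification $S^n(tn^{-1/3})=S^{n^{2/3}}(t)$ for the service term, a Taylor expansion for the deterministic bias, and then the random time-change theorem combined with continuity of addition after showing $n^{1/3}B^n(tn^{-1/3})\to t$ via a local Glivenko--Cantelli argument and the continuity of $\psi$ at $0$. The only place you are slightly vaguer than the paper is the tightness step and the fluid limit for $\bar A^n$ (the paper proves an explicit fourth-moment bound and a Borel--Cantelli/Glivenko--Cantelli chain at the $n^{-1/3}$ scale), but the strategy you outline is the same one the paper executes.
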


\textbf{The scaling exponents.}\qquad Let us now give an heuristic motivation for the scaling exponents in \eqref{eq:diffusion_scaled_free_process}. Define the general time scaling exponent as $-\alpha$ and the spatial scaling exponent as $\beta$, for some $\alpha, \beta>0$ to be determined, so that $\difX$ is given by
\begin{align}\label{eq:arrival_process_diffusion_scaled_generic_exponents}%
\difX &= n^{\beta}\Big(\frac{A^n(tn^{-\alpha})}{n}-\FT (tn^{-\alpha}) \Big) + n^{\beta}\Big(\frac{S^n(B^n(tn^{-\alpha}))}{n} - \frac{B^n(tn^{-\alpha})}{\mathbb{E}[S]}\Big)\\
 &\quad+ n^{\beta}(F(tn^{-\alpha}) - f_{\sss T}(0) tn^{-\alpha}).\notag 
 %=: \hat A ^n(t) + \hat S^n (t) +  n^{\beta}(\FT(tn^{-\alpha}) - f_{\sss T}(0) tn^{-\alpha}).
\end{align}%
For the deterministic drift to converge to a non-trivial limit it is necessary that $\alpha, \beta$ be such that $2\alpha = \beta$. Indeed, replacing $\FT(t n^{-\alpha})$ with its Taylor expansion up to the second term, we get 
\begin{equation}%
n^{\beta}(\FT(t n^{-\alpha}) - \fT(0) t n^{-\alpha}) =n^{\beta}\Big(\frac{\fT'(0)}{2} t^2 n^{-2\alpha} + o (n^{-2\alpha})\Big).  
\end{equation}%
Moreover, a necessary condition for $\hat A^n (\cdot)$ in \eqref{eq:arrival_process_diffusion_scaled_generic_exponents} to converge to a non-trivial random process is that, for fixed time $t>0$, the variance of $\difA$ be $\Theta(1)$. This is given by
\begin{align}%
\Var(\difA) &= \frac{n^{2\beta}}{n}\Var (\mathds 1_{\{T\leq tn^{-\alpha}\}})\nnl
&= \frac{n^{2\beta}}{n} \mathbb P(T\leq tn^{-\alpha})(1 - \mathbb P(T\leq tn^{-\alpha}))\nnl
&= \frac{n^{2\beta}}{n}(\fT(0)tn^{-\alpha} + o(n^{-\alpha})).
\end{align}%
Then, $\alpha$ and $\beta$ should be such that
\begin{equation}%
\frac{n^{2\beta-\alpha}}{n} = O(1),
\end{equation}%
which, together with $\beta = 2\alpha$, imply that $\alpha = 1/3$ and $\beta = 2/3$.

\textbf{Comparison with known results.}\qquad We conclude by drawing a connection between Theorem \ref{th:main_theorem} and the analogous result in \cite{bet2014heavy}. There, the queue length process is shown to converge to $\phi(X)(t)$, where $X(t) = \sigma B(t) -t^2/2$, where $\sigma^2 = \E[S^2]/\E[S]^3$ and $B(t)$ is a standard Brownian motion. The random process consisting of the sum of two Brownian motions in \eqref{eq:limiting_free_process} is equivalent in distribution to a single Brownian motion with variance equal to 
\begin{equation}%
\fT(0)+\frac{\E[S^2]-\E[S]^2}{\E[S]^3}.
\end{equation}%
By the heavy-traffic condition \eqref{eq:heavy_traffic_condition} this can be simplified  to
\begin{equation}%
\frac{\E[S]^2+\E[S^2]-\E[S]^2}{\E[S]^3} = \frac{\E[S^2]}{\E[S]^3}.
\end{equation}%
Therefore, the two limits are equal in distribution, as expected.

\section{Proof of Theorem \ref{th:main_theorem}}\label{sec:proof}
\subsection{Overview of the proof}
The proof of Theorem \ref{th:main_theorem} proceeds in several steps. These consist in proving convergence of the three terms in \eqref{eq:diffusion_scaled_queue_length} to the respective terms in \eqref{eq:limiting_free_process} separately. The first term in \eqref{eq:diffusion_scaled_queue_length} is the centred and rescaled empirical distribution function of the sequence $(T_i)_{i\geq1}$. Therefore, its convergence to $B_1(\fT(0)t)$ can be seen as a `local Donsker's Theorem', in which the limiting Brownian Bridge is replaced by a Brownian motion. The second term in \eqref{eq:diffusion_scaled_queue_length} is a time-changed, centred and rescaled renewal process and thus converges by a random time-change theorem and the FCLT for renewal processes. The third term also converges trivially to the limiting quadratic drift. Then, 
the convergence \eqref{eq:main_theorem_reflected_convergence} follows immediately from \eqref{eq:main_theorem} by the continuity of the Skorokhod reflection $\phi(x)$ in all $x\in\mathcal C$, the space of real-valued continuous functions, see \cite[Theorem 13.5.1]{StochasticProcess}.
%
%\begin{enumerate}%
%\item Convergence of the drift is automatic and self-explanatory;
%\item Convergence of the first term in \eqref{eq:main_theorem} can be seen as a `local Donsker's Theorem', in which the limiting Brownian Bridge is replaced by a Brownian motion;
%\item Convergence of $n^{1/3}B^n(tn^{-1/3})$ to a certain deterministic limit;
%\item Convergence of $n^{2/3}(S^n(tn^{-1/3})/n - tn^{-1/3}/\E[S])$ to a Brownian limit (while proving this one also easily gets a functional LLN);
%\item Application of a random time change theorem (as found e.g. in \cite{billingsley1999convergence}) to conclude that $S\circ B$ converges to the appropriate limit
%\item Joint convergence of $A, S, B$ happens because of independence and because $B$ converges to a deterministic limit.
%\end{enumerate}%
%%
%
\subsection{A local Donsker's Theorem}
For sake of simplicity, let us define
\begin{equation}\label{eq:arrival_process_diffusion_scaled}%
\difA := n^{2/3}\Big(\frac{A^n(t n^{-1/3})}{n}-\FT (t n^{-1/3}) \Big)
\end{equation}%
and 
\begin{equation}%
\hat A(t) := B_1(\fT(0)t).
\end{equation}%
The goal of this section is to prove the following:
\begin{lemma}[Convergence of the arrival process]\label{lem:local_donsker_theorem}
As $n\rightarrow\infty$,
\begin{equation}\label{eq:claim_local_donsker_theorem}%
\difA\stackrel{\mathrm d}{\rightarrow} \hat A(t),\qquad \mathrm{in}~(\mathcal D, J_1).
\end{equation}%
\end{lemma}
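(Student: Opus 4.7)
The plan is to combine finite-dimensional convergence with tightness in $(\mathcal{D}, J_1)$; since the candidate limit $\hat A(t) = B_1(\fT(0) t)$ is almost surely continuous, this is in fact equivalent to convergence in $(\mathcal{D}, U)$.

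For finite-dimensional convergence at any $0 < t_1 < \cdots < t_k$, I would apply the multivariate Lindeberg--Feller CLT to the increment vector
\[
\bigl(\hat A^n(t_j) - \hat A^n(t_{j-1})\bigr)_{j=1}^{k} = n^{-1/3} \sum_{i=1}^n V_i^{(n)},
\]
where $t_0 := 0$ and $V_i^{(n)} \in \mathbb{R}^k$ has $j$th coordinate $\mathds{1}\{T_i \in (t_{j-1} n^{-1/3}, t_j n^{-1/3}]\} - (\FT(t_j n^{-1/3}) - \FT(t_{j-1} n^{-1/3}))$. A Taylor expansion of $\FT$ at zero shows that the scaled covariance matrix is diagonal in the limit with entries $\fT(0)(t_j - t_{j-1})$: the diagonal variances $q_{j,n}(1-q_{j,n})$ (with $q_{j,n} \sim \fT(0)(t_j - t_{j-1}) n^{-1/3}$) give the required entries after multiplication by $n^{1/3}$, while the off-diagonal covariances (products of two probabilities each of order $n^{-1/3}$) are of order $n^{-1/3}$ after scaling and thus vanish. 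The Lindeberg condition is trivial since each coordinate of $V_i^{(n)}$ is bounded by $1$ almost surely while the normalization is $n^{1/3} \to \infty$. The joint limit thus agrees with the law of the increments of $B_1(\fT(0)\,\cdot)$.

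Tightness in $(\mathcal{D}, J_1)$ is the main challenge, because the natural second-moment bound $\mathbb{E}[(\hat A^n(t) - \hat A^n(s))^2] \sim \fT(0)(t - s)$ falls short of Billingsley's moment criterion. My plan is to exploit a Rosenthal-type inequality for the centered Bernoulli summands, yielding for every $k \geq 2$
\[
\mathbb{E}\bigl[|\hat A^n(t) - \hat A^n(s)|^{2k}\bigr] \leq C_k \bigl((t-s)^k + (t-s)\, n^{-2(k-1)/3}\bigr).
\]
On the macroscopic scale $|t-s| \geq n^{-2/3}$ the first term dominates and delivers $(t-s)^k$ with $k \geq 2$, which is enough to apply Billingsley's criterion. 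The microscopic scale $|t-s| < n^{-2/3}$ is handled by the order of limits in the tightness condition (one takes $n \to \infty$ first, then $\delta \to 0$, so eventually $\delta > n^{-2/3}$), combined with the fact that each jump of $\hat A^n$ has amplitude $n^{-1/3} \to 0$. A cleaner alternative factors the problem through the classical uniform empirical process: setting $U_i := \FT(T_i)$ and $W_n := \sqrt{n}(G_n - \mathrm{id})$, one has $\hat A^n(t) = n^{1/6} W_n(\FT(tn^{-1/3}))$, and Donsker's theorem combined with the Brownian scaling $n^{1/6} B^{\mathrm{br}}(\fT(0) t n^{-1/3}) \stackrel{\mathrm{d}}{\to} B(\fT(0) t)$ then delivers the conclusion directly.

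The main obstacle is tightness, specifically the need to reconcile two very different time regimes: on macroscopic scales $\hat A^n$ behaves like a Gaussian process, while on microscopic scales it resembles a scaled Poisson counting process with atoms of size $n^{-1/3}$. Once tightness is established, identification of the limit from the finite-dimensional distributions is immediate.
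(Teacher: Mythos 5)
Your finite-dimensional argument is essentially the paper's: Lindeberg--Feller after Taylor expanding $\FT$ near zero, with the joint distribution collapsed to one dimension. The paper uses Cram\'er--Wold on the vector $(\hat A^n(t_1),\ldots,\hat A^n(t_k))$ whereas you work directly with the increment vector and a multivariate CLT; this is cosmetically cleaner (the limiting covariance is diagonal without manipulation) but mathematically the same step.

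Tightness is where you diverge from the paper, and where your proposal has a real gap. The paper uses the \emph{two-adjacent-increment} criterion of Billingsley (Theorem~13.5, condition~(13.14)): $\E[\vert\hat A^n(t)-\hat A^n(t_1)\vert^2\,\vert\hat A^n(t_2)-\hat A^n(t)\vert^2]\leq(f(t_2)-f(t_1))^2$. This is the criterion designed precisely for jump processes: if an interval $[t_1,t_2]$ contains at most one jump time, one of the two factors vanishes and the bound is trivially satisfied, so the microscopic scale poses no difficulty at all. The computation with $p_1 p_2 \leq (p_1+p_2)^2$ is the whole point. Your Rosenthal bound is correct and your macroscopic/microscopic split is the right instinct, but as written you are implicitly using a \emph{single}-increment moment criterion, and such a criterion does not directly yield tightness for a process whose paths have jumps: for each fixed $n$ and arbitrarily small $\vert t-s\vert$ the bound $(t-s)\,n^{-2(k-1)/3}$ does not decay in $(t-s)$. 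Your ``order of limits'' remark can be turned into a proof, but only via an additional discretization step: bound the uniform modulus of continuity by the maximum grid increment at mesh $\delta$ (exploiting that $\hat A^n$ is a difference of two monotone functions), then union-bound over the $O(T/\delta)$ grid cells, which gives $\delta^{k-1}+n^{-2(k-1)/3}$ and now the $\limsup_n$ followed by $\delta\to 0$ genuinely does kill both terms when $k\geq 2$. That whole chain is not in your proposal and is exactly what the paper's two-increment criterion packages for you.

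Your ``cleaner alternative'' through the uniform empirical process is, however, not valid as stated. Writing $\hat A^n(t)=n^{1/6}W_n(\FT(tn^{-1/3}))$ is fine, but Donsker's theorem gives only $W_n\Rightarrow B^{\mathrm{br}}$; the map $f\mapsto n^{1/6}f(\fT(0)t n^{-1/3})$ depends on $n$ and magnifies small scales of $W_n$ by $n^{1/6}$, so the continuous mapping theorem does not apply. You cannot compose weak convergence with a blow-up that depends on the index. To make this route work one needs a \emph{rate} of approximation of $W_n$ by a Brownian bridge that beats $n^{-1/6}$, i.e.~a KMT-type strong embedding, which is a substantially heavier tool than the elementary moment computation the paper carries out.
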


\begin{proof}
The proof proceeds in two steps. First, we prove convergence of the finite-dimensional distributions. This characterizes the limit uniquely. Second, we prove tightness of the family $(\difA)_{n\geq1}$, seen as elements of $\mathcal P ( \mathcal D)$, the space of measures on the Polish space $\mathcal D$ of c\`adl\`ag functions.
By definition, we say that the finite-dimensional distributions of $\hat A^n (\cdot)$ converge to the finite-dimensional distributions of $\hat A(\cdot)$ if, for every $n\in\mathbb N$ and for each choice of $(t_i)_{i=1}^n$ such that $0 < t_1 < t_2 < \ldots < t_n < \infty$ it holds that, as $n\rightarrow\infty$,
\begin{equation}\label{eq:finite_dimensional_convergence_definition}%
(\hat A^n(t_1), ,\ldots,\hat A^n(t_n)) \dconv (\hat A(t_1), \ldots, \hat A(t_n)).
\end{equation}%
For simplicity we shall prove \eqref{eq:finite_dimensional_convergence_definition}  for $t_1<t_2$, the generalization to an arbitrary choice of $(t_i)_{i=1}^n$ being straightforward. We then aim to show that, as $n\rightarrow\infty$,
\begin{equation}%
(\hat A^n (t_1), \hat A^n(t_2) )\dconv (\hat A (t_1), \hat A(t_2) ).
\end{equation}%
Let $\mathcal N (m, v)$ denote a normally distributed random variable with mean $m$ and covariance matrix $v$. Then  $(\hat A (t_1), \hat A(t_2))\sim \mathcal N(m, V_{t_1,t_2})$, with mean $m=(0,0)$ and covariance matrix $V_{t_1,t_2}$ given by
\begin{equation}
V_{t_1,t_2} = \fT(0)\left(
\begin{array}{cc}
t_1 & t_1\wedge t_2\\
t_1\wedge t_2 & t_2
\end{array}\right),
\end{equation}%
where $a\wedge b = \min\{a,b\}$. To show joint convergence, we apply the Cram\'er-Wold device. Given an arbitrary vector $\gamma = (\gamma_1,\gamma_2)\in\mathbb R^2$, we aim to show that, as $n\rightarrow\infty$,
\begin{equation}\label{eq:cramer_wold_device}%
\gamma_1\hat A^n (t_1)+\gamma_2\hat A^n(t_2)\dconv \gamma_1 \hat A(t_1) + \gamma_2 \hat A(t_2).
\end{equation}%
This is done through the following straightforward generalization of the Lindeberg-Feller CLT.
\begin{theorem}[Lindeberg-Feller CLT \cite{klenke2008probability}]\label{th:lindeberg_CLT}%
Let $(X_{n,l})_{l=1}^n$ be an array of random variables such that $\E[X_{n,l}] = 0$ for all $n\geq 1$ and $l\leq n$ and $\sum_{l=1}^n \Var (X_{n,l})\rightarrow1$. Define 
\begin{equation}%
S_n := X_{n,1} + \ldots + X_{n,n}.
\end{equation}%
Assume that the \emph{Lindeberg condition} holds, i.e. for $\varepsilon>0$,
\begin{equation}%
\frac{1}{\Var(S_n)}\sum_{l=1}^n \E[X_{n,l}^2\mathds 1_{\{X_{n,l}^2> \varepsilon^2\Var(S_n)\}}]\rightarrow 0,\qquad n\rightarrow\infty.
\end{equation}%
Then $(S_n)_{n\geq1}$ converges in distribution to a standard normal random variable.
\end{theorem}%
We remark that in the usual formulation of the Lindeberg-Feller CLT it is assumed that $\sum_{l=1}^n \Var (X_{n,l}) = 1$. The proof of the theorem, as presented e.g.~in \cite{klenke2008probability} can be directly generalized to accommodate for the assumption that $\sum_{l=1}^n \Var (X_{n,l})\rightarrow1$. We now take $X_{n,l}$ to be
\begin{equation}%
X_{n,l} = \gamma_1\frac{ \mathds 1_{\{T_l \leq t_1n^{-1/3}\}} - \FT(t_1n^{-1/3})}{n^{1/3}v_{t_1,t_2}} + \gamma_2\frac{ \mathds 1_{\{T_l \leq t_2n^{-1/3}\}} - \FT(t_2n^{-1/3})}{n^{1/3}v_{t_1,t_2}},
\end{equation}%
where $v_{t_1,t_2}$ is a normalizing constant and is given by
\begin{equation}%
v_{t_1,t_2} = \sqrt{\fT(0) (\gamma_1^2 t_1 + \gamma_2^2 t_2 + 2 \gamma_1\gamma_2t_1)}.
\end{equation}%
Recall that $t_1<t_2$ by assumption. In order to deduce the desired convergence \eqref{eq:cramer_wold_device} we are left to check the conditions of Theorem \ref{th:lindeberg_CLT}. Trivially, $ \E[X_{n,l}] = 0$. Moreover, it is possible to explicitly compute $\Var (X_{n,l})$ as follows:
\begin{align}%
\Var (X_{n,l}) &= \frac{\gamma_1 ^2}{n^{2/3}v_{t_1,t_2}^2} (\FT(t_1n^{-1/3}) - \FT(t_1n^{-1/3})^2)\nnl
&\quad+\frac{\gamma_2 ^2}{n^{2/3}v_{t_1,t_2}^2} (\FT(t_2n^{-1/3}) - \FT(t_2n^{-1/3})^2) \nnl
&\quad+\frac{2\gamma_1\gamma_2}{n^{2/3}v_{t_1,t_2}^2}(\FT(t_1n^{-1/3}) - \FT(t_1n^{-1/3})\FT(t_2n^{-1/3}))\nnl
&= \frac{\fT(0)}{v_{t_1,t_2}^2} \Big(\frac{\gamma_1^2}{n} t_1 + \frac{\gamma_2^2}{n} t_2 + 2 \frac{\gamma_1\gamma_2}{n}t_1\Big) + O(n^{-4/3}),
\end{align}%
where in the second equality the distribution function $\FT(\cdot)$ was Taylor expanded. In particular,
\begin{equation}
\sum_{l=1}^n \Var(X_{n,l}) = 1 + O(n^{-1/3}).
\end{equation}
The \emph{Lindeberg condition} is also satisfied, since
%
%. Indeed,
%%
%\begin{equation}%
%\vert X_{n,l}\vert ^2 \leq \frac{(\gamma_1+\gamma_2)^2}{n^{2/3}v_{t_1,t_2}^2},
%\end{equation}%
%%
%and $\Var (S_n) = \sum_{l=1}^n\Var (X_{n,l}) = \Theta(1)$.
%
\begin{align}%
\sum_{l=1}^n &\frac{1}{n^{2/3}v_{t_1,t_2}}\E[(\mathds 1_{\{T_i\leq t_1n^{-1/3}\}}-\FT(t_1 n^{-1/3}))^2\mathds 1_{\{(\mathds 1_{\{T_i\leq t_1n^{-1/3}\}}-\FT(t_1 n^{-1/3}))\geq \varepsilon n^{1/3}\}}]\nnl
&=\frac{n^{1/3}}{v_{t_1,t_2}}\E[(\mathds 1_{\{T_1\leq t_1n^{-1/3}\}}-\FT(t_1 n^{-1/3}))^2\mathds 1_{\{(\mathds 1_{\{T_1\leq t_1n^{-1/3}\}}-\FT(t_1 n^{-1/3}))\geq \varepsilon n^{1/3}\}}]\nnl
&\leq \frac{n^{1/3}}{v_{t_1,t_2}}\sqrt{\E[(\mathds 1_{\{T_1\leq t_1n^{-1/3}\}}-\FT(t_1 n^{-1/3}))^4]}\sqrt{ \mathbb P (\mathds 1_{\{T_1\leq t_1n^{-1/3}\}}-\FT(t_1 n^{-1/3})\geq \varepsilon n^{1/3})},
\end{align}%
by the Cauchy-Schwartz inequality. The first term is of the order $O(n^{-1/3})$, while the second is identically zero for $n$ large enough. 

By Theorem \ref{th:lindeberg_CLT}, 
\begin{equation}\label{eq:cramer_wold_device_applied}%
\frac{1}{v_{t_1,t_2}}(\gamma_1,\gamma_2)\cdot(\hat{A}^n(t_1),\hat{A}^n(t_2))^{\mathrm t}\dconv \mathcal N(0,1),
\end{equation}%
where $\cdot$ denotes the usual scalar product and $q^{\mathrm t}$ denotes the transpose of a vector $q$. However, since
\begin{equation}%
(\gamma_1,\gamma_2)\cdot V_{t_1,t_2}\cdot (\gamma_1,\gamma_2)^{\mathrm t} = v_{t_1,t_2}^2,
\end{equation}%
then
\begin{equation}%
\mathcal N(0,1)\sr{\mathrm d}{=} \frac{1}{v_{t_1,t_2}}(\gamma_1,\gamma_2)\cdot\mathcal N ((0,0), V_{t_1,t_2}),
\end{equation}%
and this together with \eqref{eq:cramer_wold_device_applied} implies \eqref{eq:cramer_wold_device}. By an application of the Cram\'er-Wold device, joint convergence follows.

The last step of the proof is to show that $(\hat{A}^n(\cdot))_{n=1}^{\infty}$ is a tight family of random variables on $\mathcal D$. By \cite[Theorem 13.5]{billingsley1999convergence}, in particular equation (13.14), it is enough for $(\hat{A}^n(\cdot))_{n=1}^{\infty}$ to satisfy the following condition. For every $T>0$,
\begin{equation}\label{eq:tightness_criterion}%
\E[\vert  \difA - \hat A^n(t_1)\vert^2\vert \hat A^n(t_2) - \difA\vert^2] \leq  (f_{\textup{inc}}(t_2) - f_{\textup{inc}}(t_1))^2,
\end{equation}%
for $0\leq t_1\leq t\leq t_2\leq T$ and $f_{\textup{inc}}(\cdot)$ is a non-decreasing function. Checking \eqref{eq:tightness_criterion} amounts to computing the mean appearing on the left side of the equation. Define
\begin{align}%
%\left\{ 
%\begin{array}{l}
p_1 &:= \FT(tn^{-1/3}) - \FT (t_1n^{-1/3}), \nnl
p_2 &:= \FT(t_2n^{-1/3}) - \FT(tn^{-1/3}). 
%
%p_3 := 1 - (\FT(t_2n^{-1/3}) - \FT(t_2n^{-1/3}))
%\end{array}
%\right.
\end{align}%
Define also
\begin{equation}%
\alpha_i:=\left\{ \begin{array}{ll}
1-p_1, & \mathrm{if}~\frac{T_i}{n^{1/3}}\in(t_1,t],\\
-p_1, & \mathrm{if}~\frac{T_i}{n^{1/3}}\nin(t_1,t],
\end{array}\right.
\end{equation}%
and 
\begin{equation}%
\beta_i:=\left\{ \begin{array}{ll}
1-p_2, & \mathrm{if}~\frac{T_i}{n^{1/3}}\in(t,t_2],\\
-p_2, & \mathrm{if}~\frac{T_i}{n^{1/3}}\nin(t,t_2],
\end{array}\right.
\end{equation}%
where we have omitted dependencies on $n$ to avoid cumbersome notation. Note that $\E[\alpha_1] = \E[\beta_1] = 0$. With the help of these definitions, \eqref{eq:tightness_criterion} can be rewritten in the following form:
\begin{equation}\label{eq:tightness_criterion_rewritten}%
\E\Big[\big(\sum_{i=1}^n\alpha_i\big)^2\big(\sum_{i=1}^n\beta_i\big)^2\Big]\leq n^{4/3}(f_{\textup{inc}}(t_2) - f_{\textup{inc}}(t_1))^2.
\end{equation}%
We will take $f_{\mathrm {inc}} (t) = \sqrt{K} t$ for a certain constant $K>0$. By definition $\alpha_i$ (resp. $\beta_i$) is independent from $\alpha_j$ and $\beta_j$ for $j\neq i$, so that the left side of  \eqref{eq:tightness_criterion_rewritten} can be simplified as 
\begin{align}%
n\E[\alpha_1^2\beta_1^2] + n(n-1) \E[\alpha_1^2]\E[\beta_2^2] + 2n(n-1)\E[\alpha_1\beta_1]\E[\alpha_2\beta_2].
\end{align}%
The first term $n\E[\alpha_1^2\beta_1^2]$ is of lower order, so we focus on the remaining two. A simple computation gives
\begin{align}%
\E[\alpha_1^2] &= p_1(1-p_1)\leq p_1,\nnl
\E[\beta_1^2] &= p_2(1-p_2)\leq p_2,\nnl
\E[\alpha_1\beta_2] &= -p_1p_2,
\end{align}%
so that, since $p_1\leq(p_1 + p_2)$ and $p_2\leq (p_1 + p_2)$,
\begin{align}%
\E\Big[\big(\sum_{i=1}^n\alpha_i\big)^2\big(\sum_{i=1}^n\beta_i\big)^2\Big] &\leq C_0 n^2 p_1p_2 \leq C_0 n^2 (p_2 + p_1)^2\nnl
&= C_0 n^2 (\FT(t_2 n^{-1/3}) - \FT(t_1 n^{-1/3}))^2 \nnl
&\leq C_1 n^{4/3} \fT(0) (t_2 - t_1)^2,
\end{align}%
for a sufficiently large $C_1>0$. Therefore, we have verified \eqref{eq:tightness_criterion_rewritten} with $f_{\mathrm {inc}} (t) = \sqrt{C_1 \fT(0)}t$.
\end{proof}
\subsection{A functional CLT for renewal processes}
We define
\begin{equation}\label{eq:service_process_diffusion_scaled}%
\hat S^n(t) := n^{2/3}\Big(\frac{S^n(tn^{-1/3})}{n} - \frac{1}{\E[S]}tn^{-1/3}\Big)
\end{equation}%
and 
\begin{equation}%
\hat S(t) := \frac{\sigma}{\E[S]^{3/2}}B_2(t),
\end{equation}%
where $\sigma^2$ is the variance of $S$.
The goal of this section is then to prove the following:
\begin{lemma}[Convergence of the service process]\label{lem:clt_renewal_processes}
As $n\rightarrow\infty$,
\begin{equation}\label{eq:claim_clt_renewal_processes}%
\hat S^n(t)\stackrel{\mathrm d}{\rightarrow} \hat S(t), \qquad\mathrm{in}~(\mathcal D, J_1).
\end{equation}%
\end{lemma}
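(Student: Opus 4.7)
The strategy is to reduce \eqref{eq:claim_clt_renewal_processes} to the classical functional CLT for renewal processes under the reparametrization $m=m_n:=n^{2/3}$.

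First I would rewrite $\hat S^n$ without nested scalings. Let $N(u):=\sup\{m\ge 1:\sum_{i=1}^m S_i\le u\}$ denote the renewal counting process associated with $(S_i)_{i\ge 1}$, so that by \eqref{eq:service_renewal_process} one has $S^n(t)=N(nt)$. Substituting this into \eqref{eq:service_process_diffusion_scaled} and distributing the factor $n^{2/3}$ inside the bracket yields
\begin{equation*}
\hat S^n(t) \;=\; n^{-1/3}\Big(N(tn^{2/3}) - \frac{tn^{2/3}}{\E[S]}\Big).
\end{equation*}
Since $n^{-1/3}=m^{-1/2}$ with $m=m_n$, this rearranges to
\begin{equation*}
\hat S^n(t) \;=\; \frac{1}{\sqrt m}\Big(N(mt) - \frac{mt}{\E[S]}\Big),
\end{equation*}
which is the canonical diffusion scaling of the renewal process $N$ with time-parameter $m$.

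At this point I would invoke the classical FCLT for renewal counting processes (see, e.g., \cite[Theorem~7.3.2]{StochasticProcess}). Because $\sigma^2=\Var(S)<\infty$ by hypothesis, that result asserts
\begin{equation*}
\frac{1}{\sqrt m}\Big(N(mt) - \frac{mt}{\E[S]}\Big) \;\dconv\; \frac{\sigma}{\E[S]^{3/2}}\, B_2(t) \qquad (m\to\infty)
\end{equation*}
in $(\mathcal D,J_1)$, where $B_2$ is a standard Brownian motion. Since $m_n\to\infty$ as $n\to\infty$, substituting $m=m_n$ delivers \eqref{eq:claim_clt_renewal_processes}.

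There is essentially no genuine obstacle in this lemma: once the nested scaling $(n^{2/3},\,tn^{-1/3})$ has been collapsed onto the canonical pair $(m,t)$, the statement reduces to a textbook invocation of the renewal FCLT. For completeness, that textbook result itself is obtained by applying Donsker's theorem to the partial sums $\sum_{i\le\lfloor ms\rfloor}S_i$ and then composing with the right-continuous first-passage inverse functional, which is continuous at strictly increasing continuous limits, a random time-change argument identifying the variance constant $\sigma^2/\E[S]^3$. The only point worth flagging for later assembly of Theorem~\ref{th:main_theorem} is that the limit $B_2$ is automatically independent of the Brownian motion $B_1$ produced by Lemma~\ref{lem:local_donsker_theorem}, since the driving sequences $(T_i)_i$ and $(S_i)_i$ are independent.
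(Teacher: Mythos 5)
Your proof is correct and follows essentially the same route as the paper: both rewrite $\hat S^n$ by observing that the scaling $(n^{2/3}, tn^{-1/3})$ collapses to the canonical renewal-CLT scaling with time parameter $m=n^{2/3}$, and then invoke a textbook FCLT for renewal processes (the paper cites \cite[Theorem~14.6]{billingsley1999convergence}, you cite \cite[Theorem~7.3.2]{StochasticProcess}). The remark on independence of $B_1$ and $B_2$ is also consistent with how the paper later assembles the result.
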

\begin{proof}
Note that $S^n(tn^{-1/3}) = S^{n^{2/3}}(t)$. Moreover,
\begin{equation}%
n^{2/3}\Big(\frac{S^n(tn^{-1/3})}{n} - \frac{1}{\E[S]}tn^{-1/3}\Big) =
\frac{S^{n^{2/3}}(t)- \E[S]^{-1}tn^{2/3}}{n^{1/3}}.
\end{equation}%
Therefore, the claim \eqref{eq:claim_clt_renewal_processes} can be proven by directly applying a FCLT for renewal processes, see e.g.~\cite[Theorem 14.6]{billingsley1999convergence}. 
\end{proof}
\subsection{Convergence of the cumulative busy time}
In this section we exploit Lemma \ref{lem:clt_renewal_processes} and the random time change theorem to prove that the rescaled service process in \eqref{eq:diffusion_scaled_queue_length} converges.
First, we prove some scaling limits for the arrival process. Define the fluid-scaled arrival process as
\begin{equation}%
\bar A^n(t) := \frac{A^n(tn^{-1/3})}{n^{2/3}}.
\end{equation}%
The following straightforward generalization of the Chebyshev inequality is useful when proving the strong Law of Large Numbers:
\begin{lemma}[Generalized Chebyshev inequality]\label{lem:generalized_chebyshev_inequality}
For any $p=1,2,\ldots$, and any random variable $X$ such that $\E[\vert X\vert^p]<\infty$,
\begin{equation}%
\mathbb P(\vert X\vert \geq \varepsilon) \leq \frac{\E[\vert X\vert ^p]}{\varepsilon^p}.
\end{equation}%
\end{lemma}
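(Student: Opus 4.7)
The plan is to reduce the statement to the standard Markov inequality applied to the nonnegative random variable $|X|^p$. The key observation is that, on the event $\{|X|\geq\varepsilon\}$, the ratio $|X|^p/\varepsilon^p$ is bounded below by $1$, so the indicator $\mathds 1_{\{|X|\geq\varepsilon\}}$ can be pointwise dominated by $|X|^p/\varepsilon^p$.

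First I would write $\mathbb P(|X|\geq\varepsilon)=\mathbb P(|X|^p\geq\varepsilon^p)$, which is valid since $t\mapsto t^p$ is strictly increasing on $[0,\infty)$ for $p\geq1$. Then I would express the probability as an expectation of an indicator and use the pointwise bound $\mathds 1_{\{|X|^p\geq\varepsilon^p\}}\leq |X|^p/\varepsilon^p$ (which holds because on the event in question the right-hand side is $\geq 1$, and on the complement both sides are nonnegative with the left-hand side equal to $0$). Taking expectations, linearity and monotonicity give
\begin{equation}
\mathbb P(|X|\geq\varepsilon)=\mathbb E[\mathds 1_{\{|X|^p\geq\varepsilon^p\}}]\leq \frac{\mathbb E[|X|^p]}{\varepsilon^p},
\end{equation}
which is the claim. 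The finiteness hypothesis $\mathbb E[|X|^p]<\infty$ is only needed to guarantee that the right-hand side is a meaningful (finite) upper bound; the inequality itself is trivially true if the right-hand side is infinite.

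There is no real obstacle here: the statement is just Markov's inequality applied to $|X|^p$, and the only thing to be careful about is that $p$ is a positive integer (or more generally any $p>0$), so that the map $t\mapsto t^p$ preserves the inequality $|X|\geq\varepsilon\geq 0$. Consequently the proof occupies essentially one display and could alternatively be written in a single line by invoking Markov's inequality for $|X|^p$ directly, without reintroducing the indicator argument.
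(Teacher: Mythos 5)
Your proof is correct: it is the standard Markov-inequality argument applied to $|X|^p$, using the pointwise bound $\mathds 1_{\{|X|^p\geq\varepsilon^p\}}\leq |X|^p/\varepsilon^p$ and monotonicity of expectation. The paper itself states this lemma without proof, treating it as a well-known fact, so there is nothing to compare against; your argument is the canonical one and fills that gap correctly.
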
%
By using Lemma \ref{lem:generalized_chebyshev_inequality} together with the Borel-Cantelli lemma, we can prove the following:
\begin{lemma}[LLN for the arrival process]\label{lem:LLN_arrival_process}%
As $n\rightarrow\infty$,
\begin{equation}\label{eq:arrival_process_pointwise_local_LLN}%
\Big\vert \bar A^n(t) - \fT(0)t \Big\vert\asconv0,
\end{equation}%
for fixed $t\geq0$.
\end{lemma}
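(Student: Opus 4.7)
The plan is to decompose the object of interest into a deterministic bias and a mean-zero stochastic part, control each separately, and deduce almost-sure convergence through Borel--Cantelli, using Lemma \ref{lem:generalized_chebyshev_inequality} with $p=4$ to get a summable tail bound. Writing $p_n := \FT(tn^{-1/3})$ and $X_i^{(n)} := \mathds 1_{\{T_i \leq tn^{-1/3}\}}$, we have $\bar A^n(t) = n^{-2/3}\sum_{i=1}^n X_i^{(n)}$, and I would split
\begin{equation*}
\bar A^n(t) - \fT(0)t = \bigl(\bar A^n(t) - \E[\bar A^n(t)]\bigr) + \bigl(n^{1/3} p_n - \fT(0)t\bigr).
\end{equation*}

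For the deterministic bias, a first-order Taylor expansion of $\FT$ at $0$ (allowed since $\fT$ is continuous, and in fact differentiable at $0$ as implicitly assumed in Theorem \ref{th:main_theorem}) gives $n^{1/3}p_n = \fT(0)t + O(n^{-1/3}) \to \fT(0)t$, so this term vanishes deterministically and needs no further argument.

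For the stochastic part, let $Z_n := \bar A^n(t) - \E[\bar A^n(t)] = n^{-2/3}\sum_{i=1}^n (X_i^{(n)} - p_n)$. A direct variance computation only gives $\Var(Z_n) = n^{-1/3} p_n(1-p_n) = O(n^{-2/3})$, whose tail bound via Chebyshev ($p=2$) is \emph{not} summable. This forces the use of Lemma \ref{lem:generalized_chebyshev_inequality} with $p=4$. Expanding the fourth moment of a sum of i.i.d.~centred summands and exploiting $\E[(X_1^{(n)}-p_n)^4]\leq \E[(X_1^{(n)}-p_n)^2]= p_n(1-p_n) = O(n^{-1/3})$, I would obtain
\begin{equation*}
\E[Z_n^4] = n^{-8/3}\bigl( n\,\E[(X_1^{(n)}-p_n)^4] + 3n(n-1)\,\E[(X_1^{(n)}-p_n)^2]^2 \bigr) = O(n^{-4/3}).
\end{equation*}
Lemma \ref{lem:generalized_chebyshev_inequality} then yields $\mathbb P(|Z_n|>\varepsilon) \leq C\varepsilon^{-4} n^{-4/3}$, which is summable, so Borel--Cantelli gives $Z_n \to 0$ almost surely. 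Combining with the vanishing bias completes the proof.

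The only real obstacle is recognising that the second-moment bound is insufficient and that one must go to the fourth moment; once this is spotted, the computation is routine because $(X_i^{(n)}-p_n)^2 \leq 1$ collapses $\mu_4$ to $\mu_2$, and the $n^2 \mu_2^2$ term dictates the overall $O(n^{-4/3})$ rate, just barely summable.
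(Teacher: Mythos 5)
Your proof is correct and follows essentially the same route as the paper: a fourth-moment Chebyshev bound (Lemma \ref{lem:generalized_chebyshev_inequality} with $p=4$), giving an $O(n^{-4/3})$ summable tail, followed by Borel--Cantelli. In fact your explicit split into the mean-zero part $Z_n$ and the deterministic bias $n^{1/3}\FT(tn^{-1/3}) - \fT(0)t$ is slightly more careful than the paper, which identifies $\bar A^n(t)-\fT(0)t$ with $\frac{1}{n}\sum Y_i$ where $Y_i = n^{1/3}(\mathds 1_{\{T_i\le tn^{-1/3}\}}-\FT(tn^{-1/3}))$ without separately noting that the residual bias $n^{1/3}\FT(tn^{-1/3})-\fT(0)t$ vanishes.
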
%
\begin{proof}
First, we rewrite
\begin{equation}%
\bar A ^n(t) -\fT(0)t = \frac{1}{n}\sum_{i=1}^n (n^{1/3}\mathds 1_{\{T_i\leq tn^{-1/3}\}} - n^{1/3}\FT(tn^{-1/3})) =: \frac{1}{n}\sum_{i=1}^n Y_i.
\end{equation}%
In order to apply the Borel-Cantelli lemma, we compute
\begin{align}%
\mathbb P\Big(\vert\sum_{i=1}^n Y_i\vert \geq \varepsilon n\Big) & \leq \frac{\E[\vert \sum_{i=1}^n Y_i\vert^4]}{n^4\varepsilon^4} \nnl
&= \frac{n \E[\vert Y_1 \vert ^4] + 3n(n-1)\E[\vert Y_1\vert^2]^2}{n^4\varepsilon^4}.
\end{align}%
It is immediate to see that the leading orders of the expectation values are
\begin{align}%
\E[\vert Y_1\vert^4] &= O(n^{4/3}\mathbb P(T_i\leq tn^{-1/3}))= O(tn),\nnl
\E[\vert Y_1\vert^2] &= O(n^{2/3}\mathbb P(T_i\leq tn^{-1/3})) = O(tn^{1/3}).
\end{align}%
We conclude that, for a large constant $C_1>0$,
\begin{equation}%
\mathbb P\Big(\vert\sum_{i=1}^n Y_i\vert \geq \varepsilon n\Big) \leq C_1 \frac{tn^2 + 3tn^{8/3}}{n^4\varepsilon^4}.
\end{equation}%
Define the event $\mathcal A := \{\vert\sum_{i=1}^n Y_i\vert \geq \varepsilon n  ~\mathrm{for~infinitely~many~}n\}$. Since
\begin{equation}%
\sum_{n=1}^{\infty} \mathbb P\Big(\vert\sum_{i=1}^n Y_i\vert \geq \varepsilon n\Big) \leq C_1 \sum_{n=1}^{\infty} \frac{tn^2 + 3tn^{8/3}}{n^4\varepsilon^4} \leq C_2 \sum_{n=1}^{\infty} \frac{1}{n^{4/3}\varepsilon^4}<\infty,
\end{equation}%
for some large constant $C_2>0$, by the Borel-Cantelli lemma,
\begin{equation}%
\mathbb P(\mathcal A)=0.
\end{equation}%
Since $\varepsilon>0$ is arbitrary, this concludes the proof of \eqref{eq:arrival_process_pointwise_local_LLN}.
%By Lemma \ref{lem:local_donsker_theorem}, for fixed $t\geq0$,
%%
%\begin{equation}%
%n^{2/3}\Big\vert \frac{A^n(t n^{-1/3})}{n}-\FT (t n^{-1/3})\Big\vert\dconv \vert Y\vert,
%\end{equation}%
%%
%where $Y\sim \mathcal N(0, \fT(0)t)$, since the function $x\mapsto \vert x\vert$ is continuous on $\mathbb R$. In particular we have that, as $n\rightarrow\infty$,
%%
%\begin{equation}\label{eq:arrival_process_pointwise_local_LLN_proof}%
%\Big\vert \bar A^n(t) - n^{1/3}\FT(t n^{-1/3})\Big\vert \Pconv 0.
%\end{equation}%
%%
%By the triangle inequality,
%%
%\begin{align}%
%\vert \bar A^n(t) - \fT(0)t \vert &\leq \vert \bar A^n(t) - n^{1/3}\FT(t n^{-1/3})\vert + \vert n^{1/3}\FT(t n^{-1/3})-\fT(0)t \vert.
%\end{align}%
%%
%By \eqref{eq:arrival_process_pointwise_local_LLN_proof} the first term on the right side converges to zero in probability, and the second also converges to zero, as can be seen by Taylor expanding $\FT(tn^{-1/3})$. 
\end{proof}%
We are now interested in obtaining a Glivenko-Cantelli-type theorem which extends the convergence \eqref{eq:arrival_process_pointwise_local_LLN} to uniform convergence over compact subsets of the positive half-line. This is summarized in the following lemma.
\begin{lemma}[Glivenko-Cantelli Theorem for the arrival process]\label{th:glivenko-cantelli_arrival_process}%
As $n\rightarrow\infty$,
\begin{equation}\label{eq:glivenko-cantelli_arrival_process}%
\bar A^n(t) \asconv \fT(0)t,\qquad \mathrm{in}~(\mathcal D,U).
\end{equation}%
Consequently, as $n\rightarrow\infty$,
\begin{equation}\label{eq:fCLT_cumulative_input_process}%
n^{1/3}C^n(t n^{-1/3}) \asconv t \qquad\mathrm{in}~(\mathcal D, U).
\end{equation}%
\end{lemma}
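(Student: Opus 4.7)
The first assertion is a P\'olya-type upgrade of Lemma~\ref{lem:LLN_arrival_process}: since $\bar A^n(\cdot)$ is non-decreasing and the limit $\fT(0)t$ is continuous and non-decreasing, a sandwich on a finite grid turns the pointwise a.s.\ convergence into uniform a.s.\ convergence on any compact interval. Concretely, I would fix $T>0$ and $\varepsilon>0$, choose a partition $0=t_0<t_1<\cdots<t_K=T$ with $\fT(0)(t_{k+1}-t_k)<\varepsilon$, apply Lemma~\ref{lem:LLN_arrival_process} at each node and intersect the finitely many almost-sure events, and then use monotonicity of $\bar A^n(\cdot)$ together with the bounds at the nodes to sandwich
\begin{equation*}
\fT(0)t_k-\varepsilon\leq\bar A^n(t_k)\leq\bar A^n(t)\leq\bar A^n(t_{k+1})\leq\fT(0)t_{k+1}+\varepsilon,
\end{equation*}
which yields $|\bar A^n(t)-\fT(0)t|\leq 2\varepsilon$ uniformly in $t\in[0,T]$ for all $n$ sufficiently large. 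Sending $\varepsilon\downarrow 0$ along a countable sequence gives \eqref{eq:glivenko-cantelli_arrival_process}.

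For the second assertion I would factor, for $t>0$,
\begin{equation*}
n^{1/3}C^n(tn^{-1/3})=\frac{1}{n^{2/3}}\sum_{i=1}^{A^n(tn^{-1/3})}S_i=\bar A^n(t)\cdot\frac{1}{A^n(tn^{-1/3})}\sum_{i=1}^{A^n(tn^{-1/3})}S_i.
\end{equation*}
Part~(i) provides $\bar A^n(t)\asconv\fT(0)t$ uniformly on compacts, and in particular $A^n(tn^{-1/3})=n^{2/3}\bar A^n(t)\to\infty$ a.s.\ whenever $t>0$. The ordinary strong law for $(S_i)_{i\geq 1}$, applied along the random divergent index $A^n(tn^{-1/3})$, then forces the second factor to converge a.s.\ to $\E[S]$, so that the product tends to $\fT(0)t\cdot\E[S]=t$ by the heavy-traffic condition \eqref{eq:heavy_traffic_condition}. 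At $t=0$ both sides of \eqref{eq:fCLT_cumulative_input_process} vanish, so pointwise a.s.\ convergence holds on $[0,\infty)$. Since $t\mapsto n^{1/3}C^n(tn^{-1/3})$ is non-decreasing and the candidate limit $t\mapsto t$ is continuous, a second application of the P\'olya sandwich from part~(i) upgrades pointwise convergence to uniform convergence on compacts, giving \eqref{eq:fCLT_cumulative_input_process}.

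The principal technical subtlety is the uniform-in-$t$ control of the random-time-changed sample mean $A^n(tn^{-1/3})^{-1}\sum_{i=1}^{A^n(tn^{-1/3})}S_i$. My strategy sidesteps this by establishing pointwise a.s.\ convergence first and then invoking monotonicity of the prelimit together with continuity of the limit, which handles the uniformization automatically — this is precisely the mechanism behind the classical Glivenko--Cantelli theorem and avoids any explicit uniform strong-law estimate.
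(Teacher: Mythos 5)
Your proof of \eqref{eq:glivenko-cantelli_arrival_process} is essentially the same as the paper's: both use a P\'olya-type sandwich on a finite grid, exploiting monotonicity of $\bar A^n(\cdot)$ and continuity of the limit, together with the pointwise a.s.\ convergence from Lemma~\ref{lem:LLN_arrival_process}.

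For \eqref{eq:fCLT_cumulative_input_process}, however, you take a genuinely different route. The paper first invokes a \emph{functional} strong law of large numbers for the partial-sum process $\sum_{i=1}^{tn^{2/3}}S_i/n^{2/3}\asconv\E[S]t$ in $(\mathcal D,U)$, then obtains joint convergence with $\bar A^n(\cdot)$ (one limit being deterministic), and finally composes via the random time-change lemma \cite[Lemma p.151]{billingsley1999convergence}. You instead factor the prelimit as
\begin{equation*}
n^{1/3}C^n(tn^{-1/3})=\bar A^n(t)\cdot\frac{1}{A^n(tn^{-1/3})}\sum_{i=1}^{A^n(tn^{-1/3})}S_i,
\end{equation*}
apply the \emph{ordinary} SLLN along the a.s.\ divergent random index $A^n(tn^{-1/3})$ to get pointwise a.s.\ convergence to $\fT(0)t\cdot\E[S]=t$, and then run a second P\'olya sandwich (valid because $C^n$ is non-decreasing and the limit is continuous) to upgrade to uniform convergence. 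Both arguments are correct. Your route is more elementary in that it avoids the functional SLLN and the random time-change theorem, at the cost of an extra sandwiching step; the paper's route packages the uniformization into the composition lemma and scales more naturally to settings where the inner time change does not converge to a deterministic limit, which is exactly the setup reused later in the proof of Theorem~\ref{th:main_theorem}. One small point you should make explicit: for fixed $t>0$ the factorization requires $A^n(tn^{-1/3})>0$, which holds eventually a.s.\ by part one, and for small $n$ one should fall back on the empty-sum convention so that both sides vanish — this is cosmetic but worth a sentence.
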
%
\begin{proof}%
Let $T>0$ be arbitrary. The claim \eqref{eq:glivenko-cantelli_arrival_process} is then equivalent to
\begin{equation}%
\lim_{n\rightarrow\infty}\sup_{t\leq T}\Big\vert \bar A^n(t) - \fT(0) t\Big\vert = 0,\qquad\mathrm{a.s.}
\end{equation}%
Let $N$ be a large but arbitrary natural number and define 
\begin{equation}%
t_j := \frac{1}{\fT(0)}\frac{j}{N}T,\qquad j=1,\ldots, N,
\end{equation}%
so that $\fT(0)t_j = \frac{j}{N}T$. The idea is that both $A^n(t)$ and $\fT(0)t$ are increasing, so for $t\in(t_{j-1},t_j)$ the difference of the two can be bounded by their values in $t_{j-1}$ and $t_j$. Then, we have convergence because of Lemma \ref{lem:LLN_arrival_process} and because $N$ is fixed. Formally, define the error as
\begin{equation}%
E_{n,N}:= \max_{j=1,\ldots,N}(\vert A^n(t_jn^{-1/3})/n^{2/3} - \fT(0)t_j\vert + \vert A^n(t_j^-n^{-1/3})/n^{2/3} - \fT(0)t_j^-\vert).
\end{equation}%
For $t\in(t_{j-1},t_j)$ we upper bound $\bar A^n(t)$ as follows
\begin{equation}%
\bar A^n(t)\leq \bar A^n(t_j^-)\leq \fT(0) t_j^- + E_{n,N} \leq \fT(0)t + E_{n,N} + \frac{T}{N},
\end{equation}%
where in the last inequality we used the bound $\vert \fT(0)t_{j}-\fT(0)t_{j-1}\vert \leq \frac{T}{N}$. Analogously, for the lower bound
\begin{equation}%
\bar A^n(t) \geq \bar A^n(t_{j-1})\geq \fT(0) t_{j-1} - E_{n,N} \geq \fT(0)t - E_{n,N} - \frac{T}{N}.
\end{equation}%
Summarizing the two bounds, since $E_{n,N}$ and $T/N$ do not depend on the choice of the sequence $(t_j)_{j=1}^N$,
\begin{equation}%
\sup_{t\leq T}\vert \bar A^n(t) - \fT(0) t \vert \leq E_{n,N} + \frac{T}{N}.
\end{equation}%
Since $N$ is fixed, almost surely
\begin{equation}%
\lim_{n\rightarrow \infty}E_{n,N} = 0,
\end{equation}%
by Lemma \ref{lem:LLN_arrival_process}. Letting $N\rightarrow\infty$, we obtain \eqref{eq:glivenko-cantelli_arrival_process}.

The convergence \eqref{eq:fCLT_cumulative_input_process} follows from \eqref{eq:glivenko-cantelli_arrival_process}. Indeed, by the  functional strong Law of Large Numbers \cite[Theorem 5.10]{chen2001fundamentals} we have that 
\begin{equation}%
\sum_{i=1}^{tn^{2/3}} \frac{S_i}{n^{2/3}}\asconv \E[S]t \qquad\mathrm{in}~(\mathcal D, U).
\end{equation}%
Since $\bar A^n(t)$ converges to a deterministic limit, we also have the joint convergence
\begin{equation}%
\Big(\sum_{i=1}^{tn^{2/3}} \frac{S_i}{n^{2/3}},\bar A^n(t)\Big)\asconv (\mathbb E[S]t, \fT(0)t),\qquad \mathrm{in}~(\mathcal D^2, WJ_1).
\end{equation}%
Note that $A^n(t)$ is non-decreasing. Then, by a time-change theorem \cite[Lemma p.151]{billingsley1999convergence},
\begin{equation}%
\sum_{i=1}^{A^n(tn^{-1/3})}\mkern-9mu \frac{S_i}{n^{2/3}} \asconv\E[S]\fT(0) t\qquad\mathrm{in}~(\mathcal D,U).
\end{equation}%
Recall that convergence in $(\mathcal D, J_1)$ to a continuous function implies convergence in $(\mathcal D, U)$. Moreover,  $\E[S]\fT(0) = 1$ by the heavy-traffic condition \eqref{eq:heavy_traffic_condition}, and this concludes the proof of \eqref{eq:fCLT_cumulative_input_process}.
\end{proof}%
Since $t\mapsto \fT(0) t$ is not a proper distribution function, Theorem \ref{th:glivenko-cantelli_arrival_process} should also be interpreted as a \emph{local} version of the usual Glivenko-Cantelli Theorem.
Let us now define the fluid-scaled cumulative busy time process as
\begin{equation}%
\bar B^n (t) := n^{1/3}B^n(tn^{-1/3}).
\end{equation}%
We are able to prove the following lemma:
\begin{lemma}[Convergence of the time-changed service process]
With assumptions as above, as $n\rightarrow\infty$,
\begin{equation}%
\bar B^n(t)\asconv t,\qquad \mathrm{in}~(\mathcal D, U),
\end{equation}%
\end{lemma}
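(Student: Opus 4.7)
The plan is to rewrite $\bar B^n(t)-t$ as a rescaled Skorokhod reflection term applied to $N^n$, and then invoke the Lipschitz continuity of $\psi$ in the uniform norm together with Lemma~\ref{th:glivenko-cantelli_arrival_process} to drive the argument of $\psi$ to zero. Since the claimed limit is continuous (in fact linear), convergence in $(\mathcal D, J_1)$ will automatically upgrade to convergence in $(\mathcal D, U)$, so one may work directly in the uniform topology throughout.

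By definition \eqref{eq:def_cumulative_busy_time} we have $B^n(t)=t-\psi(N^n)(t)$, so
\[
\bar B^n(t) - t \;=\; -\,n^{1/3}\psi(N^n)(tn^{-1/3}).
\]
The map $\psi(f)(t)=-\inf_{s\le t}(f(s))^-$ is positively homogeneous and compatible with time rescaling, and a one-line computation substituting $u=s/n^{1/3}$ yields the identity
\[
n^{1/3}\psi(N^n)(tn^{-1/3}) \;=\; \psi\!\left(n^{1/3}N^n(\cdot\,n^{-1/3})\right)(t).
\]
Since $N^n(s)=C^n(s)-s$, the argument of $\psi$ above, evaluated at $u$, equals $n^{1/3}C^n(u\,n^{-1/3})-u$. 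Lemma~\ref{th:glivenko-cantelli_arrival_process} states precisely that $n^{1/3}C^n(\cdot\,n^{-1/3})\to\,\cdot\,$ almost surely in $(\mathcal D,U)$, so the argument of $\psi$ converges almost surely to the zero function, uniformly on compact subsets of $\mathbb R^+$.

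The conclusion then follows from the Lipschitz property of $\psi$ under the supremum norm: for every $T>0$ and all $f,g\in\mathcal D$,
\[
\sup_{t\le T}\lvert\psi(f)(t)-\psi(g)(t)\rvert \;\le\; \sup_{t\le T}\lvert f(t)-g(t)\rvert.
\]
Applying this with $g\equiv 0$ (so that $\psi(g)\equiv 0$) and $f=n^{1/3}N^n(\cdot\,n^{-1/3})$ yields $\psi\bigl(n^{1/3}N^n(\cdot\,n^{-1/3})\bigr)\to 0$ almost surely in $(\mathcal D,U)$, and substituting into the first display gives $\bar B^n(t)\to t$ in $(\mathcal D,U)$.

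There is no serious obstacle here: the lemma is essentially a continuous-mapping corollary of Lemma~\ref{th:glivenko-cantelli_arrival_process}. The only points demanding care are the verification of the scaling identity for $\psi$ and the observation that the heavy-traffic condition $\E[S]\fT(0)=1$ has already been absorbed into Lemma~\ref{th:glivenko-cantelli_arrival_process}, which is exactly what makes $n^{1/3}N^n(\cdot\,n^{-1/3})$ converge to zero rather than to a nontrivial drift, so that the reflection term vanishes in the limit at precisely the right scale.
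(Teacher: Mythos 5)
Your proposal is correct and follows essentially the same route as the paper: both rewrite $\bar B^n(t)-t$ via the Skorokhod regulator $\psi$ applied to the rescaled net-put process, feed in Lemma~\ref{th:glivenko-cantelli_arrival_process} to drive the argument of $\psi$ to zero, and conclude by continuity of $\psi$. The only cosmetic difference is that you establish this continuity directly through the Lipschitz bound $\sup_{t\le T}\lvert\psi(f)(t)-\psi(g)(t)\rvert\le\sup_{t\le T}\lvert f(t)-g(t)\rvert$ and make the scaling identity for $\psi$ explicit, whereas the paper simply cites that the zero function is a continuity point of $\psi$ and invokes the Continuous Mapping Theorem.
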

\begin{proof}
Recall that $B^n$ can be rewritten as
\begin{equation}%
B^n(t) = t +  \Psi(N^n)(t)= t +\inf_{s\leq t} (C^n(s) - s)^-.
\end{equation}%
By Lemma \ref{th:glivenko-cantelli_arrival_process}, $n^{1/3}(C^n(t n^{-1/3}) - t n^{-1/3})\asconv 0$ in $(\mathcal D, U)$. Moreover, the null function is a continuity point of $\Psi(\cdot)$ with probability one \cite[Lemma 13.4.1]{StochasticProcess}. The claim then follows from the Continuous Mapping Theorem \cite[Theorem 3.4.3]{StochasticProcess}.
\end{proof}

\subsection{Proof of Theorem \ref{th:main_theorem}}
Since $\bar B^n(\cdot)$ converges to a deterministic limit, we have
\begin{equation}%
(\hat A^n(t), \hat S^n(t), \bar B^n(t))\dconv (\hat A(t), \hat S(t), t),\qquad\mathrm{in}~(\mathcal D^3, WJ_1).
\end{equation}%
Note also that $\hat A^n (\cdot)$ and $\hat S^n(\cdot)$ are independent processes, so that $\hat A(\cdot)$ and $\hat S(\cdot)$ are also independent. Applying the random time-change theorem \cite[Lemma p.151]{billingsley1999convergence}, we get
\begin{equation}%
(\hat A^n(t), \hat S^n(\bar B^n(t)) )\dconv (\hat A(t), \hat S(t)),\qquad\mathrm{in}~(\mathcal D^2, WJ_1).
\end{equation}%
Since the limit points are continuous, by \cite[Theorem 4.1]{whitt1980some} addition is also continuous, so that
\begin{equation}%
\hat A^n(t) - \hat S^n(t) + n^{2/3}(\FT(tn^{-1/3}) - \fT(0)tn^{-1/3})\dconv \hat A(t) - \hat S(t) -\frac{\fT'(0)}{2}t^2,\quad\mathrm{in}~(\mathcal{D}, J_1),
\end{equation}%
which is the first claim \eqref{eq:main_theorem}. By \cite[Theorem 13.5.1]{StochasticProcess}, the reflection map $\phi(\cdot)$ is continuous when $\mathcal D$ is endowed with the $J_1$ topology, from which the second claim \eqref{eq:main_theorem_reflected_convergence} follows. 
\qed
\section{Sample path Little's Law}\label{sec:sample_path_little_law}
In this section we apply the ideas and results from the previous sections to  derive a sample path version of Little's Law. The standard formulation of Little's Law relates the expected waiting time $\E[W]$,  to the expected queue length $\mathbb E[L_q]$ as $\E[L_q] = \lambda \E[W]$, where $\lambda$ is the rate at which customers arrive. We will work instead with the \emph{virtual} waiting time $W^n(t)$, defined as
\begin{equation}\label{eq:virtual_waiting_time}
W^n(t) := C^n(t) - B^n(t).
\end{equation}%
Accordingly, we define the diffusion-scaled virtual waiting time as 
\begin{equation}%
\hat W^n(t) := n^{2/3}\Big(C^n(tn^{-1/3}) -  B^n(tn^{-1/3})\Big) = n^{1/3}\Big(\sum_{i=1}^{A^n(tn^{-1/3})}\frac{S_i}{n^{2/3}} -  \bar B^n(t)\Big).
\end{equation}%
First, we rewrite the expression for $\hat W^n(t)$ as
\begin{align}\label{eq:virtual_waiting_time_diffusion_scaled}%
\hat W^n(t) &= n^{1/3}\Big(\sum_{i=1}^{\bar A^n(t)n^{2/3}}\frac{S_i}{n^{2/3}} - \E[S] \bar A^n(t)\Big)+ n^{1/3}\E[S](  \bar A^n(t) -  n^{1/3} \FT(tn^{-1/3})) \nnl
&\quad  + n^{1/3}\E[S](\FT(tn^{-1/3})-\fT(0)t) + n^{1/3}\E[S](\fT(0)t-\bar B^n(t)/\E[S]).
\end{align}%
By \eqref{eq:idle_time_representation}, $n^{1/3}(\fT(0)t- \bar B^n(t) /\E[S]) = \psi (\hat X^n)(t)$, so that \eqref{eq:virtual_waiting_time_diffusion_scaled} can be further simplified as
\begin{align}\label{eq:virtual_waiting_time_diffusion_scaled_rewritten}%
\hat W^n (t) &= \E[S]\hat Q^n(t)+ n^{1/3}\Big(\sum_{i=1}^{\bar A^n(t)n^{2/3}}\frac{S_i}{n^{2/3}} - \E[S]\bar A^n(t)\Big) + \E[S]\hat S^n(\bar B^n(t)).
\end{align}%
We now focus on the second and third terms in \eqref{eq:virtual_waiting_time_diffusion_scaled_rewritten}. Let us ignore the time change $t\mapsto \bar A^n(t)$ and $t\mapsto\bar B^n(t)$ for the moment. Then, the second and third terms in \eqref{eq:virtual_waiting_time_diffusion_scaled_rewritten} represent the difference between the diffusion-scaled partial sums and the (negative) diffusion-scaled counting process associated with the sequence of random variables $(S_i)_{i\geq1}$. These converge to the same limiting Brownian motion, so that their contribution to $\hat W^n(t)$ vanishes in the limit. We now aim to make this reasoning rigorous.
\begin{theorem}[Diffusion sample path Little's Law]\label{th:diffusion_sample_path_little_law}
As $n\rightarrow\infty$,
\begin{equation}%
\hat W^n(t) \dconv \hat W(t),\qquad\mathrm{in}~(\mathcal{D}, J_1),
\end{equation}%
where 
\begin{equation}%
\hat W(t) := \E[S]\hat Q(t).
\end{equation}%
\end{theorem}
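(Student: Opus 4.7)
\quad The plan starts from the decomposition \eqref{eq:virtual_waiting_time_diffusion_scaled_rewritten}:
\[
\hat W^n(t) = \E[S]\hat Q^n(t) + R_1^n(t) + R_2^n(t),
\]
with $R_1^n(t) := n^{1/3}\bigl(\sum_{i=1}^{\bar A^n(t) n^{2/3}} S_i/n^{2/3} - \E[S]\bar A^n(t)\bigr)$ and $R_2^n(t) := \E[S]\hat S^n(\bar B^n(t))$. Theorem~\ref{th:main_theorem} gives $\hat Q^n \dconv \hat Q$ in $(\mathcal D,J_1)$, hence $\E[S]\hat Q^n \dconv \hat W$, so by a Slutsky-type convergence-together argument the theorem follows once one establishes that $R_1^n(t) + R_2^n(t) \sr{\mathbb P}{\rightarrow} 0$ uniformly on compact intervals.

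To exhibit the cancellation I would work with a single underlying renewal process. Let $P(k) := \sum_{i=1}^k S_i$ and $N(t) := \sup\{k : P(k) \leq t\}$, and set
\[
\tilde P_m(s) := \frac{P(\lfloor ms\rfloor) - ms\E[S]}{\sqrt{m}},\qquad \tilde N_m(s) := \frac{N(ms) - ms/\E[S]}{\sqrt{m}}.
\]
Taking $m = n^{2/3}$, a direct inspection gives $R_1^n(t) = \tilde P_{n^{2/3}}(\bar A^n(t))$ and, as seen in the proof of Lemma~\ref{lem:clt_renewal_processes}, $\hat S^n(s) = \tilde N_{n^{2/3}}(s)$, whence $R_2^n(t) = \E[S]\,\tilde N_{n^{2/3}}(\bar B^n(t))$. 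The algebraic heart of the argument is the inverse-function identity $P(N(s)) \leq s < P(N(s)+1)$, which upon centering and dividing by $\sqrt{m}$ yields, for any fixed $s > 0$,
\[
\tilde P_m(N(ms)/m) = -\E[S]\,\tilde N_m(s) + \oP(1),
\]
the $\oP(1)$ absorbing the boundary jump $S_{N(ms)+1}/\sqrt{m}$, which vanishes in probability thanks to $\sigma^2 < \infty$.

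Combining this with $N(ms)/m \asconv s/\E[S]$, Donsker's invariance principle $\tilde P_m \dconv \sigma B$ in $(\mathcal D, U)$, and the uniform continuity on compacts of the limiting Brownian motion, one upgrades the pointwise identity to the uniform-on-compacts cancellation
\[
\sup_{s\leq T}\bigl\vert \tilde P_m(s/\E[S]) + \E[S]\,\tilde N_m(s)\bigr\vert \sr{\mathbb P}{\rightarrow} 0.
\]
Invoking $\bar A^n(t) \asconv t/\E[S]$ from Lemma~\ref{th:glivenko-cantelli_arrival_process} and $\bar B^n(t) \asconv t$ from the preceding lemma, together once more with the uniform stochastic continuity of $\tilde P_m$ and $\tilde N_m$, transfers the cancellation to the time-changed arguments and gives $R_1^n + R_2^n \sr{\mathbb P}{\rightarrow} 0$ uniformly on compacts, concluding the proof.

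The main obstacle I anticipate is passing from the pointwise inverse-function identity to a functional statement that survives the random time changes $\bar A^n,\bar B^n$: since the pre-limit processes $\tilde P_m,\tilde N_m$ are not almost surely uniformly continuous, their modulus of continuity must be controlled in probability. The standard route is to combine tightness of $\tilde P_m$ in $\mathcal D$ with the continuity of its Brownian limit; a more direct alternative is to invoke a joint FCLT for $(\tilde P_m,\tilde N_m)$, in which the two limits coincide up to a deterministic sign and time rescaling, so that cancellation holds on the nose in the limit.
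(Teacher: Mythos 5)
Your proposal is correct and rests on the same underlying cancellation as the paper: the centered partial-sum term $R_1^n=\hat P^n(\bar A^n)$ and the centered counting-process term $R_2^n=\E[S]\hat S^n(\bar B^n)$ offset each other because a renewal counting process and its partial-sum process share the same Brownian limit up to sign and a deterministic time rescaling. Where you diverge is in how that cancellation is established and fed into the conclusion. The paper simply invokes the joint FCLT for partial-sum and inverse processes, \cite[Theorem 7.3.2]{StochasticProcess}, to get joint convergence $(\hat P^n,\hat S^n)\dconv(-\E[S]\hat S(\E[S]\,\cdot),\hat S(\cdot))$, then layers on the random time changes $\bar A^n\rightarrow\fT(0)\cdot$ and $\bar B^n\rightarrow\cdot$ and applies the continuous mapping theorem to the sum. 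You instead re-derive the core of that cited theorem from the inverse-function sandwich $P(N(s))\leq s<P(N(s)+1)$, bound the overshoot by $S_{N(ms)+1}/\sqrt m=\oP(1)$ uniformly on compacts, and close via a converging-together (Slutsky) argument. Your route is more elementary and self-contained, and the Slutsky step cleanly sidesteps a point the paper leaves implicit, namely that $\hat Q^n$ must itself be recognized as a continuous functional of the jointly convergent vector before continuous mapping can be applied to all three terms at once. The cost is that you must control the modulus of continuity of $\tilde P_m$ \emph{in probability} to push the pointwise inverse identity through the random time change; you identify this correctly, and it does follow from tightness of $\tilde P_m$ together with continuity of its Brownian limit, exactly as you propose.
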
%
\begin{proof}
Define the diffusion-scaled partial sum process as
\begin{equation}%
\hat P^n(t) = n^{1/3} \Big(\sum_{i=1}^{tn^{2/3}}\frac{S_i}{n^{2/3}}-\E[S]\bar A^n(t)\Big).
\end{equation}%
By \cite[Theorem 7.3.2]{StochasticProcess}, $\hat P^n(\cdot)$ and $\hat S^n(\cdot)$ jointly converge as follows:
\begin{equation}%
(\hat P^n(t), \hat S^n(t)) \dconv (- \E[S]\hat S(\E[S]t), \hat S(t)),\qquad\mathrm{in}~(\mathcal D^2, WJ_1),
\end{equation}%
where $\hat S^n(t)$ is the same as in \eqref{eq:claim_clt_renewal_processes}. Since $\hat A^n(t)$ is independent from $\hat P^n(t)$ and $\hat S^n(t)$,
\begin{equation}%
(\hat A^n(t), \hat P^n(t), \hat S^n(t)) \dconv (\hat A(t), - \E[S]\hat S(\E[S]t), \hat S(t)),\qquad\mathrm{in}~(\mathcal D^3, WJ_1).
\end{equation}%
Moreover, since $\bar A^n(\cdot)$ and $\bar B^n(\cdot)$ converge to deterministic limits,  by \cite[Theorem 11.4.5]{StochasticProcess} the above convergence can be strengthened to 
\begin{equation}%
(\hat A^n(t),\hat P^n(t), \hat S^n(t),\bar A^n(t),\bar B^n(t)) \dconv (\hat A(t), - \E[S]\hat S(\E[S]t), \hat S(t), \fT(0)t, t),\quad\mathrm{in}~(\mathcal D^4, WJ_1).
\end{equation}%
It follows that 
\begin{equation}\label{eq:little_law_proof_joint_convergence}%
(\hat A^n(t),\hat P^n(\bar A^n(t)), \E[S]\hat S^n(\bar B^n(t))) \dconv (\hat A(t),- \E[S]\hat S(t),\E[S]\hat S(t)),\qquad\mathrm{in}~(\mathcal D^3, WJ_1),
\end{equation}%
by the heavy-traffic assumption \eqref{eq:heavy_traffic_condition}. The limit functions are  continuous with probability one, and thus their sums converge to the sums of the limits. This observation, together with the Continuous Mapping Theorem and \eqref{eq:little_law_proof_joint_convergence} imply that
\begin{equation}%
\E[S]\hat Q^n(t)+ \hat P^n(\bar A^n(t)) + \E[S]\hat S^n(\bar B^n(t))\dconv \hat Q(t),\qquad\mathrm{in}~(\mathcal D, J_1),
\end{equation}%
as $n\rightarrow\infty$, as desired.
%%
%\begin{equation}%
% d_{J_1} (-\hat P^n(\bar A^n(\cdot)), \E[S]\hat S^n(\bar B^n(\cdot)))\dconv 0,
%\end{equation}% 
%%
%where $d_{J_1}(\cdot, \cdot)$ is the metric induced by the Skorokhod norm. Moreover, since
%%
%\begin{equation}%
%d_{J_1}(\hat W^n(\cdot), \E[S]\hat Q^n(\cdot)) =  d_{J_1} (-\hat P^n(\bar A^n(\cdot)), \E[S]\hat S^n(\bar B^n(\cdot))),
%\end{equation}%
%%
%by the convergence-together theorem \cite[Theorem 11.4.7]{StochasticProcess},
%%
%\begin{equation}%
%(\hat W^n(t), \E[S]\difQ)\dconv (\E[S] \hat Q(t), \E[S] \hat Q(t))\qquad\mathrm{in}~(\mathcal D^2, WJ_1),
%\end{equation}%
%%
%from which the claim follows.
\end{proof}%

By our assumptions, $\E[S] = 1/\fT(0)$ so that we retrieve the usual form of Little's Law as
\begin{equation}%
\hat Q(t) = \fT(0)\hat W(t).
\end{equation}%
Note that $\fT(0) = \lambda$ when $T$ is exponentially distributed with mean $1/\lambda$. 

Theorem \ref{th:diffusion_sample_path_little_law} should be contrasted with the analogous result in \cite[Proposition 4]{honnappa2015delta}. There, an extra diffusion term appears. This term is a function of the fluid limit of the queue length process. However, in our setting, this limit is the zero process, as can be seen in \eqref{eq:diffusion_scaled_queue_length}, where no centering is needed and thus the term disappears.
\section{Conclusions}\label{sec:conclusions}
While the $\DG$ queue originated as a simple model for the study of general time-inhomogeneous queueing systems, it has very recently gained much attention since it represents the standard model for queues in which only a finite number of customers request for service \cite{honnappa2014transitory}. In this paper we have shown how techniques from the theory of stochastic-process limits, and more specifically heavy-traffic diffusion approximations, can be successfully employed to prove convergence results for the $\DG$ queue. In particular, we have proven that when suitably rescaled (according to a non-standard scaling) the queue length process converges in distribution to a reflected Brownian motion with downwards parabolic drift. Our result is a generalization of \cite{bet2014heavy}, where the arrival times are assumed to follow an exponential distribution. There, more demanding embedding and martingale techniques were used. Therefore, the techniques we have introduced offer a significant computational and conceptual advantage, allowing one to easily study other quantities of interest of the $\DG$ model other than the queue length process. As an example of the strength of our approach, we have proven a sample path diffusion Little's Law, which relates the virtual waiting time and the queue length processes. 
\section*{Acknowledgments}%
The author is very grateful to Debankur Mukherjee and Jori Selen for suggesting many improvements to the manuscript and numerous helpful discussions.

\bibliographystyle{abbrv}

\bibliography{library} % for Windows

%\bibliography{../Common_files/library.bib} % for Mac
\end{document}